\newcommand{\R}{\mathds{R}}
\newcommand{\N}{\mathds{N}}
\newcommand{\E}{{\rm {\mathbb E}}}
\newcommand{\var}{{\rm {\mathbb V}ar}}
\newcommand{\cov}{{\rm {\mathbb C}ov}}
\newcommand{\mvn}{f^{MvN}_d}
\newcommand{\mg}{f^{MG}_d}
\newcommand{\eqd}{\stackrel{d}{=}}
\def\bone{{\mathds 1}}
\newcommand{\si}{{\sigma}}
\newcommand{\tto}{t\to\infty}
\newtheorem{theorem}{Theorem}[section]
\newtheorem{proposition}[theorem]{Proposition}
\newtheorem{lemma}[theorem]{Lemma}
\newtheorem{definition}[theorem]{Definition}
\newtheorem{remark}[theorem]{Remark}
\newcommand{\beao}{\begin{eqnarray*}}
\newcommand{\eeao}{\end{eqnarray*}\noindent}
\newcommand{\beam}{\begin{eqnarray}}
\newcommand{\eeam}{\end{eqnarray}\noindent}
\newcommand{\barr}{\begin{array}}
\newcommand{\earr}{\end{array}}
\numberwithin{equation}{section} 
\begin{document}

\thispagestyle{empty}

\fontsize{12}{14pt plus.8pt minus .6pt}\selectfont \vspace{0.8pc}

\begin{center}
 {\large \textbf{FRACTIONALLY INTEGRATED COGARCH PROCESSES} 	}
 \end{center}

\fontsize{11}{12pt plus.8pt minus .6pt}\selectfont \vspace{.4cm}

 \centerline{STEPHAN HAUG, CLAUDIA KL\"UPPELBERG AND GERMAN STRAUB} 
\vspace{.4cm} \centerline{\it Technical University of Munich} \vspace{.55cm} \fontsize{10}{11.5pt plus.8pt minus
.6pt}\selectfont

\begin{quotation}
\noindent \textit{ Abstract:}
{We construct fractionally integrated continuous-time GARCH models, which capture the observed long range dependence of squared volatility in high-frequency data. 
Since the usual Molchan-Golosov and Mandelbrot-van-Ness fractional kernels lead to problems in the definition of the model,
we resort to moderately long memory processes by choosing a fractional parameter $d\in(-0.5,0)$ and remove the singularities of the kernel to obtain non-pathological sample paths. 
The volatility of the new fractional COGARCH process has positive features like stationarity, and its covariance function shows an algebraic decay, which make it applicable to econometric high-frequency data. In an empirical application the model is fitted to exchange rate data using a simulation-based version of the generalised method of moments.}

\vspace{9pt}
\noindent \textit{Key words and phrases:}
 {fractionally integrated COGARCH, FICOGARCH, long range dependence, fractional subordinator, stationarity, L\'evy process, stochastic volatility modelling}
\end{quotation}



\section{Introduction}

Long range dependence in time series or their latent volatilities has been observed in many application areas and is often modelled by fractional processes.
From a statistical point of view, using long range dependence models for data or their latent structure, the question should be asked, where this effect originates from. 
Obviously, a small deterministic trend or seasonality introduces long range dependence into the data. But also, as is well-known (cf. \cite{mikoschstarica}, \cite{laixing} and \cite{DavisYau}), change points in the model or a change of parameters in time can be responsible for long range dependence effects. 
On the other hand, when these effects are small, and trends or parameters change very slowly in time, they are statistically often not detectable; cf. \citet{Dette} for test procedures and a literature review on this topic.
This is in particular true, when such small effects occur in the volatility process, which is latent. 
Consequently, a long range dependent model may well be the best choice as it at least captures the observed behaviour.

Prominent discrete-time models
in finance have been early on discussed for instance in \citet*{Baillie199602}, \citet*{Baillie1996}, \cite{Bollerslev1996}, \citet*{Ding1993} and \cite{GP}.

When it comes to continuous-time stochastic volatility processes long memory Gaussian models for the log-volatility have been suggested and analysed in  
\cite{Comte1996}, \cite{ComteRenault1996}, and \cite{ComteRenault1998}. 
As Gaussian models are distributionally restricted, such concepts have been extended to more general models in two ways. 
\citet*{Comte2012} introduce long memory in the Heston model by considering an affine volatility model, where they fractionally integrate the square-root volatility process. 
As they emphasize, there is nowadays a quite general agreement on the idea that jump components in
the return process (and possibly in the volatility process itself) are needed to explain
very short term option prices, but long memory processes can address option
pricing puzzles as steep volatility smiles in long term options and co-movements
between implied and realized volatility for longer maturities without introducing unrealistic volatility behavior
in both short and long term returns (cf. \cite{Comte2012} for more extensive discussions).

Another line of research involves general L\'evy processes instead of Brownian motion. 
For instance, \citet*{anh2002} proposed a L\'evy driven stochastic volatility model, where the volatility process is of moving average type and allows for long memory by the choice of the moving average kernel.  
Another related approach replaces the fractional Brownian motion driving process by a fractional L\'evy process, which have been introduced as analogues to fractional Brownian motion in \citet*{Marquardt2006}.
This leads to fractional Ornstein-Uhlenbeck processes and other continuous-time long memory models as in \citet{FK}.


Introducing long range dependence in non-linear models like ARCH and GARCH and their continuous-time counterparts resulting in stationary models is much more difficult, and we discuss this first for discrete-time models.
Different approaches have been proposed in the literature for defining long range dependent ARCH and GARCH models.
 \cite{Baillie1996} and \cite{Ding1996} were among the first to have discussed ways to incorporate long memory in a stationary ARCH model. 
But some of these approaches have certain drawbacks as emphasised in \cite{Mikosch2003}.
Indeed,  \citet*{Douc2008} were the first to specify conditions for the existence of a strictly stationary solution to the fractionally integrated GARCH$(p,d,q)$ equation.
But they give a proof only for the case $p=q=0$ and certain values of $d\in (d^\ast,1)$ for $d^\ast >0$, where  
they use the fact that fractionally integrated GARCH$(0,d,0)$ models are a subclass of ARCH($\infty$) models. 
The conditions derived in \citet*{Kaza2003} for the existence of a strictly stationary solution of an ARCH($\infty$) model rule out the fractionally integrated GARCH model.
The model was extended by \cite{Douc2008}, however, the second moment of the resulting stationary solution is not finite.
\citet*{Giraitisetal2015} showed the existence of a stationary FIGARCH process with finite variance by considering it as solution of an ARCH($\infty$) representation without constant term in the defining equation.  
They also proved that the volatility process in this model has a covariance function which is non-summable and, hence, possesses a long memory property. 


 The goal of this paper is to construct a fractionally integrated continuous-time GARCH model in order to capture long range dependence of squared volatility as observed in high-frequency data. 
Our approach to obtain a new continuous-time non-linear stochastic volatility model 
is based on the continuous-time GARCH (COGARCH) process. 
The definition of the COGARCH process has been based on replacing the driving noise of a GARCH model by the jumps of a L\'evy process.
In order to define a fractionally integrated COGARCH process, we cannot use the fractionally integrated GARCH model as a sole guidance because of the problems discussed above. However, it will be helpful to review the main ideas of the construction of the COGARCH process before defining a new fractionally integrated version.

\subsection{Review of continuous-time GARCH models}

The GARCH(1,1) model is a discrete-time process with three parameters, $a_0>0$,  $a_1\ge 0$, $b_1\ge 0$, specifying the variance as a  discrete-time stochastic recursion, or difference equation.
We write it using two equations, one specifying the mean level process (the observed data, perhaps after removal of trend or other
deterministic feature, to approximate stationarity) and the other specifying the variance process, which is time dependent and randomly fluctuating. Thus, 
\begin{equation} \label{1a} 
Y_i=\varepsilon_i \sigma_i\,,\quad i=1,2,\ldots\,,
\end{equation}
with squared volatility
\begin{equation} \label{1b}
\sigma^2_i=a_0 + a_1 Y_{i-1}^2 +b_1 \sigma^2_{i-1}
=a_0 + (a_1 \varepsilon_{i-1}^2 +b_1 )\sigma^2_{i-1}\,,\quad i=1,2,\ldots\,.
\end{equation}
Here the starting values $\varepsilon_0$ and $\sigma_0$ are given quantities, possibly random, and usually assumed independent of  $(\varepsilon_i)_{i=1,2,\ldots}$, which are the sole source of randomness in the model.
The $\varepsilon_i$, $i=1,2,\ldots$, are assumed to be independent identically distributed (i.i.d.) random variables with mean 0.
Serial dependence between the $Y_i$ is introduced via the dependence of the $\sigma^2_i$ on their past values.
Conditional on $\sigma_i$,  $Y_i$ simply has the distribution of $\varepsilon_i$, scaled by  $\sigma_i$, which in general (as long as $a_1,b_1>0$) is time dependent, hence the conditional heteroscedasticity part of the terminology.
The ``autoregressive" aspect refers to the form of the dependence of  $\sigma^2_i$ on  $\sigma^2_{i-1}$.

\subsubsection{Continuous-time limits of  GARCH models}

Motivated by the availability of high-frequency data and by a need for option pricing technologies for realistic models,
classical diffusion limits have been used in a natural way to obtain continuous-time limits of discrete-time processes.
For the GARCH(1,1) model, the best-known limit model is the volatility-modulated model due to \cite{Nelson1990b} given by 
\begin{equation*}
    \mathrm{d} G_t = \sigma_t {\rm d}B_t^{(1)}, \qquad t \ge 0,
\end{equation*}
where the squared volatility process $(\sigma^2_t)_{t\ge0}$ is the solution of the stochastic differential equation (SDE)
\begin{equation*}
\mathrm{d} \sigma^2_t = \left(\alpha_0 -\beta_1\sigma^2_t\right) \mathrm{d} t +\alpha_1
\sigma^2_t\mathrm{d} B_t^{(2)}, \qquad t > 0,
\end{equation*}
with initial values $G_0$ and $\si_0^2$, $B^{(1)}$ and $B^{(2)}$ are independent Brownian motions, and
parameters $\alpha_0 > 0$, $\alpha_1\ge 0,\beta_1\ge 0$.

Unfortunately, diffusion limits can lose certain essential properties of the discrete-time models.
It is surprising and counter-intuitive, for example,
that Nelson's diffusion limit of the GARCH process is driven by two
independent Brownian motions, i.e., has two independent sources of
randomness, whereas the discrete-time GARCH process is driven only by a
single i.i.d. noise sequence. One of the features of the GARCH process
is the idea that large innovations in  the price process are almost
immediately manifested as innovations in the volatility process, but this feedback mechanism is lost in models such as Nelson's continuous-time version.
Further, the appearance of an extra source of variation can have
implications for completeness of the model when used for option pricing.

The phenomenon that a diffusion limit may be driven by two
independent Brownian motions, while the discrete-time model is given in
terms of a single i.i.d. sequence, is not restricted to the
classical GARCH process. \cite{Duan1997} has shown that
this occurs for many GARCH like processes.

Moreover, such continuous-time limits can have distinctly different
statistical properties compared to the original discrete-time processes.
As was shown in \cite{Wang2002}, parameter estimation in the discrete-time
GARCH and the corresponding
continuous-time limit stochastic volatility model may
yield different estimates.
Thus these  kinds of continuous-time
models are probabilistically and statistically different
from their discrete-time progenitors.
See \cite{Lindner2009} for an overview of  continuous-time
approximations to GARCH processes.

In \citet*{Klueppelberg2004}, the authors proposed a radically
different approach to obtain a continuous-time model. 
The COGARCH(1,1) model is a direct analogue of the
discrete-time GARCH(1,1), based on a single
background driving L\'evy process,
and generalises the essential features
of the discrete-time GARCH(1,1) process in a natural way.
In the next subsection we review this model.

\subsubsection{The COGARCH(1,1) model}

The COGARCH(1,1) model is specified by two equations, the mean and variance equations, analogous to \eqref{1a} and \eqref{1b}.
The single source of randomness is a so-called {\em background driving L\'evy process}
$L:=(L_t)_{t\ge 0}$ with characteristic triple $(\gamma_L,\tau^2_L,\nu_L)$, where $\gamma_L$ is a real-valued parameter, $\tau^2_L\geq 0$ is the variance of the Brownian component of $L$ and $\nu_L$ is the L\'evy measure, which
dictates how the jumps occur. 
We refer to \cite{ContTankov2003} for background on L\'evy processes in the context of financial modeling.

The driving noise of the GARCH(1,1) model are the i.i.d. innovations $\varepsilon_i$. In the
COGARCH model these innovations are replaced by  the jumps  $\Delta L_s = L_s-L_{s-}$ of $L$, where $L_{s-}$ is the left limit of the sample path of $L$ at $s>0$. 
Thus the  COGARCH(1,1) process  $(G_t)_{t\ge 0}$ is given by
\begin{equation*}
    \mathrm{d} G_t = \sigma_{t-} {\rm d}L_t, \qquad t > 0,
\end{equation*}
where the squared volatility process $(\sigma^2_t)_{t\ge0}$ is  the solution of the SDE
\begin{equation}\label{2b}
\mathrm{d} \sigma^2_t = \left(\alpha_0\beta_1 -\beta_1\sigma^2_{t-}\right) \mathrm{d} t +\alpha_1
\sigma^2_{t-}\mathrm{d} [L,L]^{(d)}_t, \qquad t > 0,
\end{equation}
for parameters $\alpha_0 > 0$, $\alpha_1\ge 0$  and $\beta_1\ge 0$ and initial values $G_0$ and $\si^2_0$.
Here $([L,L]^{(d)}_t)_{t\geq 0}$ denotes the discrete part of the quadratic variation process of $L$, which is defined as
 \[
 [L,L]_t = \sigma^2 t+ \sum_{0<s\leq t} (\Delta L_s)^2 = \sigma^2 t + [L,L]^{(d)}_t\,,\qquad t\ge0.
 \]
$([L,L]^{(d)}_t)_{t\ge 0}$ is an example for a subordinator, i.e., a process with non-negative, independent and stationary increments, in particular, it has increasing sample paths. A formal definition can be found at the beginning of Section~2.1.

To see the analogy with  \eqref{1a} and \eqref{1b},
note from  \eqref{1b} that
 \[
\sigma^2_i - \sigma^2_{i-1}
=a_0 - (1-b_1)  \sigma^2_{i-1} +
a_1 \sigma^2_{i-1}\varepsilon^2_{i-1},
 \]
which corresponds to \eqref{2b}  when the time increment $\mathrm{d} t$ is taken as a unit time interval.


The solution of the SDE \eqref{2b} can be written with the help of an auxiliary L\'evy process $(X_t)_{t\ge 0}$ defined by
 \begin{equation*}
 X_t = \beta_1 t-\sum_{0<s\le t}\log(1+\alpha_1(\Delta L_s)^2), \qquad t \ge 0.
 \end{equation*}
The process $(X_t)_{t\ge 0}$  is a L\'evy process with positive drift and negative jumps; i.e., its L\'evy measure has support $(-\infty,0)$. 
Further it is a process of finite variation, arising in a natural way in  \cite{Klueppelberg2004},
where the COGARCH(1,1) is motivated directly as an analogue
to the discrete-time GARCH(1,1) process.

Using Ito's formula (see e.g. \citet[Section 8.3]{ContTankov2003}), it can be verified that the solution of  \eqref{2b} can be written in terms of $(X_t)_{t\ge0}$ as
 \begin{equation*}
\sigma^2_t=
e^{-X_t}\left(\alpha_0\beta_1\int_0^t e^{X_s}{\rm d}s+\sigma^2_0\right),\qquad t\ge 0,
\end{equation*}
which  reveals $(\sigma^2_t)_{t\ge0}$ as a generalised Ornstein-Uhlenbeck  process,
parameterised by $(\alpha_0, \alpha_1, \beta_1)$,
and driven by the L\'evy process $L$, see \cite{LindnerMaller2005} for details on
generalised Ornstein-Uhlenbeck processes.

\citet[Theorem 3.2]{Klueppelberg2004} show that the variance process
$(\sigma^2_t)_{t\ge 0}$ for the COGARCH(1,1) is a time homogeneous
Markov process, and, further, that the bivariate process
$(G_t,\sigma^2_t)_{t\ge 0}$ is  Markovian.
A finite random variable $\sigma^2_\infty$ exists as limit
in distribution of  $\sigma^2_t$ as $t\to\infty$, if the cumulant generating function 
\[
    \Psi(s) =\ln(\E(e^{-sX_1})) = -\beta_1 s +\int_\R\big((1+\alpha_1 x^2)^s-1)\nu_L(dx)\,,
\]
of the auxiliary process $(X_t)_{t\ge 0}$ satisfies $\Psi(1)<0$.
When $\sigma^2_\infty$ exists, it has the same distribution as $\alpha_0\beta_1 \int_0^\infty e^{-X_t}\mathrm{d} t$.
If this is the case and  $(\sigma^2_t)_{t\ge 0}$ starts with $\sigma^2_0$ having the distribution of
$\sigma^2_\infty$, independent of $L$, then $(\sigma^2_t)_{t\ge 0}$ is
strictly stationary and  $(G_t)_{t\ge 0}$ is a process
with  stationary increments, see \citet[Theorem 3.2 and Corollary 3.1]{Klueppelberg2004}.

Returns over time intervals of fixed length $\Delta>0$ are denoted by
\[
    \Delta G_t = G_t-G_{t-\Delta}=\int_{(t-\Delta,t]}\si_{s-}\,\mathrm{d}L_s, \quad t \geq \Delta\,,
\]
so that $(\Delta G_{\Delta i})_{i \in \mathbb{N}}$ describes a sequence of returns over equidistant and non-overlapping time intervals.
Calculating the corresponding quantities for the volatility yields
\begin{align}\label{volact}
\Delta\si^{2}_{\Delta i} 
    & =  \si^{2}_{\Delta i}-\si^{2}_{\Delta(i-1)} 
      = \int_{(\Delta(i-1),\Delta i]}\left((\alpha_0\beta_1 -\beta_1\si^2_s)\,\mathrm{d}s+\alpha_1\,
\si_{s-}^2\, d[L,L]_s^{(d)}\right)\nonumber\\
    & =   \alpha_0\beta_1 \Delta -\beta_1 \int_{(\Delta(i-1),\Delta i]}\si^2_s\,\mathrm{d}s + \alpha_1 \int_{(\Delta(i-1),\Delta i]}\si_{s-}^2\, d[L,L]^{(d)}_s\,,\quad i=0,1,2,\dots\,.
\end{align}
It is also worth noting that the stochastic process $([G,G]_t)_{t\ge0}$ defined as
 \[
[G,G]_t   :=  \int_{(0,t]} \sigma^2_{s-}\,d[L,L]_s 
= \sigma^2 \int_0^t \sigma_{s-}\mathrm{d}s + \sum_{0<s\le t}\sigma^2_{s-}(\Delta L_s)^2, \quad t\ge 0\,,
 \]
is the quadratic variation of $G$, so that the second integral
in \eqref{volact} corresponds to the
jumps of the quadratic variation of $G$ accumulated during $(\Delta(i-1),\Delta i]$.

The following result (see \citet*[Proposition 1]{Haugetal2007}) shows that the COGARCH(1,1) has a similar moment structure as
the GARCH(1,1) model. In particular, there is no correlation between increments, but between the squared increments.

Suppose that $L$ has finite variance and zero mean, and that
$\Psi(1)<0$. Let $(\sigma_t^2)_{t \geq 0}$ be the stationary volatility
process, so that $(G_t)_{t \geq 0}$ has stationary increments. Then
$\E (G_t^2) < \infty$ for all $t \geq 0$, and
for every $t, h\geq \Delta>0$ it holds that
\[
\E(\Delta G_t) \, = \, 0\,,\quad
\E(\Delta G_t)^2 \, = \, \frac{\alpha_0\beta_1 \Delta}{|\Psi(1)|}\E(L_1^2)\,,\quad
\cov(\Delta G_t,\Delta G_{t+h}) \, = \,  0.
\]
If further $\E(L_1^4)<\infty$ and $\Psi(2)<0$, then
$\E (G_t^4) < \infty$ for all $t \geq 0$ and, if additionally
the L\'evy measure $\nu_L$ of $L$ is such that
$\int_{\R}x^3\nu_L(dx)=0$, then for every $t, h \geq \Delta > 0$ it holds that
\begin{align*}
\E(\Delta G_t)^4  
    & = 6 \E (L_1^2) \frac{ (\alpha_0\beta_1)^2}{\Psi(1)^2}
\left(\frac{2\beta_1}{\varphi} + 2 \tau_L^2 - \E (L_1^2)\right)
\left( \frac{ 2}{|\Psi(2)|}- \frac{1}{|\Psi(1)|} \right) 
\left( \Delta -\frac{1 - e^{-\Delta |\Psi(1)|}}{|\Psi(1)|} \right)\\
& \quad 
+ \frac{2(\alpha_0\beta_1)^2}{\varphi^2}
\left( \frac{2}{|\Psi(2)|}
- \frac{1}{|\Psi(1)|} \right) \Delta
  + 3 \frac{(\alpha_0\beta_1)^2}{\Psi(1)^2} (\E (L_1^2))^2 \Delta^2
\end{align*}
and
\begin{align*}
\cov((\Delta G_t)^2,(\Delta G_{t+h})^2)
    & =   \E (L_1^2)\frac{(\alpha_0\beta_1)^2}{|\Psi(1)|^3}
\left(\frac{2 \beta_1}{\varphi}+ 2\tau_L^2 - \E (L_1^2)\right)
\left( \frac{2}{|\Psi(2)|} - \frac{1}{|\Psi(1)|}
\right)\\ 
    &\quad \times \left(1-e^{-\Delta |\Psi(1)|}\right) \left( e^{\Delta |\Psi(1)|} -1 \right)
  \;  e^{-h |\Psi(1)|} \, > \, 0\,.
\end{align*}
Due to its exponential decay, the autocovariance function of the squared returns shows that the COGARCH(1,1) is a short-memory process. This fact is the main motivation for our paper. 
The goal is to define a continuous-time GARCH process with slowly decaying autocovariance function of the squared returns.
At the heart of the definition of the new fractionally integrated COGARCH model are so-called fractional L\'evy processes, which are introduced in the next section.

\subsection{Fractional L\'evy processes}

Fractional L\'evy processes (fLp) generalise fractional Brownian motion (fBm) in a natural way.
The fBm has a long history, for instance,
it is well-known that fBm can be defined as a stochastic integral of a Volterra-type kernel with respect to Brownian
motion.
Two such kernels with fractional parameter $d\in (-0.5,0.5)$ are the Mandelbrot-van-Ness kernel, 
which leads to fBm on $\R$, and the Molchan-Golosov kernel, which results in fBm on $\R_+=[0,\infty)$, 
see e.g. Chapter~3 in \citet{mishura2018}.
Such Gaussian processes have continuous sample paths, stationary increments, and they are self-similar. 
Moreover, they can model long range dependence for $d>0$.

For fractional parameter $d\in (-0.5,0.5)$  the Molchan-Golosov (MG) kernel is defined for $t\ge0$ as
\beam\label{mgkernel}
\mg(t,s) = \left(\frac{(2d+1)\Gamma(1-d)}{\Gamma(1+d)\Gamma(1-2d)}\right)^{\frac{1}{2}} (t-s)^{d} {}_2F_1\left(-d, d, d+1, \frac{s-t}{s}\right) \,,\quad s\in[0,t],
\eeam
where ${}_2F_1$ is Gauss' hypergeometric function (see e.g. \citet{Olde2010}), and the Mandelbrot-van-Ness (MvN) kernel is defined for $t\in\R$ as
\beam\label{mvnkernel}
 \mvn(t,s) = \frac{1}{\Gamma(d+1)} \left((t-s)^d_+ - (-s)^d_+\right), \qquad s\in\R,
\eeam
where $x_+:=\mathrm{max}\{x,0\}$ for $x\in\R$. 
The latter kernel allows for the following representation of fBm
 \beam\label{fBm}
 B^d_t =  \int_\R \mvn(t,s) dW_s\,,\qquad t\in \R\,,
 \eeam
where $(W_t)_{t\in\R}$ is a two-sided Brownian motion. 
Replacing the driving Brownian motion by a non-Gaussian L\'evy process leads to fractional models with a wealth of finite-dimensional distributions. 

Both models 
\beam\label{fracsub}
\tilde L_t^d = \int_0^t  \mg(t,s) dL_s,\,\, t\ge 0,\quad\mbox{and}\quad L_t^d =\int_{-\infty}^\infty \mvn(t,s) dL_s,\,\, t\in\R,
\eeam
are useful depending on the envisaged application.
For instance, the MG-fLp on the left-hand side has no infinite history, whereas the MvN-fLp on the right-hand side has.
More properties have been shown e.g. in  \citet*{Bender2012}, \citet*{Engelke2013}, \citet*{Marquardt2006}, and \citet*{Tikan2011}. 

In the COGARCH(1,1) model the driving L\'evy process of the squared volatility process \eqref{2b} is the subordinator $([L,L]^{(d)}_t)_{t\geq 0}$, which is the discrete part of the quadratic variation of the background driving L\'evy process $L$. 
As a consequence of \citet*[Theorem~2.7]{Rajput1989},  
the existence of a stochastic integral of a kernel $f$ with respect to an arbitrary subordinator $(S_t)_{t\in\R}$ requires that $(S_t)_{t\in\R}$ has finite second moment and $f\in L^1\cap L^2$. 
Unfortunately, the MG kernel, which belongs to $L^1\cap L^2$, leads to a fractional subordinator which has not necessarily stationary increments; cf. \citet[Proposition~3.11]{Tikan2011}. 
On the other hand, by \citet[Proposition~2]{Engelke2013}, the MvN kernel belongs to $L^1\cap L^2$ only for negative fractional parameter, whereas a long memory property is obtained only for $d\in (0,0.5)$. 
In addition, for negative $d$ the kernel has singularities, which leads with positive probability to discontinuous and unbounded sample paths of the fLp; cf. \citet[Theorem 4]{Rosinski1989}.
With the goal to obtain non-pathological sample paths we define a modified MvN kernel and obtain a fractional subordinator, which has a continuous modification and stationary increments. 
The autocovariance function of these increments decreases with an algebraic rate. 
As it is integrable, the increments do not show long memory behaviour in the classical sense, but for $d$ close to zero, their autocovariance function decreases at a very slow rate.
This allows us to define a fractionally integrated COGARCH(1,1) volatility process driven by the subordinator $([L,L]^{(d)}_t)_{t\ge0}$. 
We also find a stationary version of this new volatility model, which results in a  volatility-modulated process with stationary increments.

\bigskip

Our paper is organised as follows. 
In Section~\ref{s2} we define a fractionally integrated COGARCH(1,1) process. 
For its definition we introduce a new fractional subordinator based on a modified MvN kernel,
which drives the volatility process. An extension to higher order fractionally integrated COGARCH($p,q$) models is straightforward.
Our main result in Section~\ref{s2} is the existence of a stationary version of the variance process, which implies that the volatility-modulated process has stationary increments.  
For a statistical application, we introduce  in Section \ref{s3}  a simulation based generalised method of moment estimator. 
The finite sample behaviour of the proposed estimator is analysed in a small simulation study. 
Afterwards we fit the model to log-returns of two different exchange rate data. 
In Section~\ref{s4} we present properties of the modified MvN kernel and those fractional subordinators, which are defined as an integral transform of a subordinator with the new modified MvN kernel. 
In particular, we show that the covariances of non-overlapping increments decrease algebraically.
All proofs are summarised in an Appendix.

\section{The fractionally integrated COGARCH process}\label{s2}

The driving L\'evy process of the volatility process in the COGARCH(1,1) model in \eqref{2b} is the discrete part of the quadratic variation process $([L,L]_t^{(d)})_{t\geq 0}$ of $L$. 
To introduce a long memory property, we will use the fractional subordinator,
\begin{align}\label{eq:fracSvola}
	S^{a,d}_t = \int_\R f_{a,d}(t,u)\; \mathrm{d}[L,L]^{(d)}_u, \qquad t\geq 0,
\end{align}
as driving process of the quadratic volatility, where $f_{a,d}$ is the modified MvN kernel, given for  $t\ge 0$, $d\in(-0.5,0)$ and $a>0$ as
 \beao
 f_{a,d}(t,s) 
    = \left(a+(-s)_+\right)^d - \left(a+(t-s)_+\right)^d, \qquad s\in\R.
 \eeao
The choice of $d$ as well as the kernel  $f_{a,d}$ and 
the fractional subordinator, defined as an integral transform of an arbitrary subordinator with $f_{a,d}$, are discussed in more detail in Section~\ref{s4}. 

To define the fractional subordinator \eqref{eq:fracSvola} as integral over the whole of $\R$, we need $[L,L]^{(d)}_t$ to be defined for all  $t\in\R$. 
We follow the usual procedure and define for any L\'evy process $L$ a two-sided version as
\beam\label{2L}
L_t=-L^{(1)}_{-t-}\bone_{\{t<0\}}+L^{(2)}_t\bone_{\{t\ge 0\}}\,,\quad t\in\R\,,
\eeam
where $L^{(1)}$ and $L^{(2)}$ are two independent and identically distributed copies of $L$. 
Then $([L,L]_t^{(d)})_{t\in\R}$ is defined as the discrete part of the quadratic variation of the two-sided L\'evy process $L$.

Now we are able to define the  \emph{fractionally integrated} COGARCH$(1,1)$ process.

\begin{definition}[FICOGARCH$(1,d,1)$]\label{def:simaFicogarch}
Let $\alpha_0,\alpha_1,\beta_1>0$, $d\in(-0.5, 0)$ and $a>0$. 
Assume $L$ to be a L\'evy process with $\E(L^4_1)<\infty$. 
Let $(S^{a,d}_t)_{t\geq 0}$ be the fractional subordinator as in \eqref{eq:fracSvola}.
Define the stochastic process $(G_t)_{t\ge 0}$ by  
\beam\label{eq:PriceMvNficogarch}
  \mathrm{d}G_t = \sigma_t \mathrm{d}L_t\,, \qquad t\ge 0, 
\eeam
where the squared volatility $(\sigma^2_t)_{t\ge 0}$ is the solution of the SDE
\beam\label{eq:sdeVolaMvNficogarch}
  \mathrm{d}\sigma^2_t =-\beta_1 (\sigma^2_{t}-\alpha_0)\; \mathrm{d}t + \alpha_1 \sigma^2_{t} \; \mathrm{d}S^{a,d}_t\,,\quad t>0\,,
\eeam
with initial values $G_0$ and $\sigma_0^2$.
The model \eqref{eq:PriceMvNficogarch} with \eqref{eq:sdeVolaMvNficogarch} 
 is called \emph{fractionally integrated COGARCH}$(1,1)$ {\em process with fractional parameter} $d$ or {\em FICOGARCH}$(1,d,1)$.
 The stochastic volatility model \eqref{eq:sdeVolaMvNficogarch} is called \emph{FICOGARCH}$(1,d,1)$ {\em volatility process}.
\end{definition}

We can state the solution of the SDE \eqref{eq:sdeVolaMvNficogarch} explicitly. It can be verified by  integration by parts, which is applicable, since $(S_t^{a,d})_{t\ge0}$ has a.s. continuous and increasing sample paths, which we show in Proposition~\ref{prop4-modsub} (i) below.

\begin{proposition}
Consider the {FICOGARCH}$(1,d,1)$ volatility process as in \eqref{eq:sdeVolaMvNficogarch}.
Then for almost all sample paths the pathwise solution of the SDE \eqref{eq:sdeVolaMvNficogarch} with initial value $\sigma^2_0>0$ is given by 
\begin{align}\label{eq:solSdeVol}
	\sigma^2_t = e^{-X_t}\left(\sigma^2_0 + \alpha_0\beta_1 \int_0^t e^{X_s}\; \mathrm{d}s\right), \qquad t\ge 0,
\end{align}
with 
\beam\label{aux}
	X_t = \beta_1 t - \alpha_1 S^{a,d}_t, \qquad t\ge 0.
\eeam
\end{proposition}

In Figure~\ref{fig:ficogarch} we compare the sample paths of two FICOGARCH$(1,d,1)$ processes for different choices of $d\in(-0.5,0)$. 
The left column shows the price, return and volatility process of a FICOGARCH$(1,-0.01,1)$ process along with the sample autocorrelation function (acf) of the volatility process.
The right column depicts the same quantities of a FICOGARCH$(1,-0.40, 1)$ process driven by the same L\'evy process. 
One expects a slower decay of the acf of the volatility process for larger values of $d$, since the autocovariance function $\gamma_\Delta$ of the stationary  increments $\Delta S^{a,d}_t = S^{a,d}_{t} - S^{a,d}_{t-\Delta}$ satisfies
\begin{align*}
		\gamma_\Delta(h) \sim \var\big([L,L]_1\big) |d| a^d  \Delta^2 \; (h\Delta+a)^{d-1},\quad h\to\infty,
\end{align*} 
which is shown in Proposition~\ref{prop5-modsub} below.
There is no closed form expression for the autocovariance function of the volatility process. 
The approach used in the COGARCH model to derive the autovcovariance function is based on the independent increments property of the driving L\'evy process and hence won't work in this setting. 
But the expected behaviour of decay of the acf is confirmed by the two sample acfs in the bottom row of Figure~\ref{fig:ficogarch}.  
For simulating the fractional L\'evy process we approximated the process by the corresponding Riemann sums as explained in \citet[Section 2.4]{Marquardt-PhD}.

\begin{figure}[htbp]
\centering
\includegraphics[width=14.8cm]{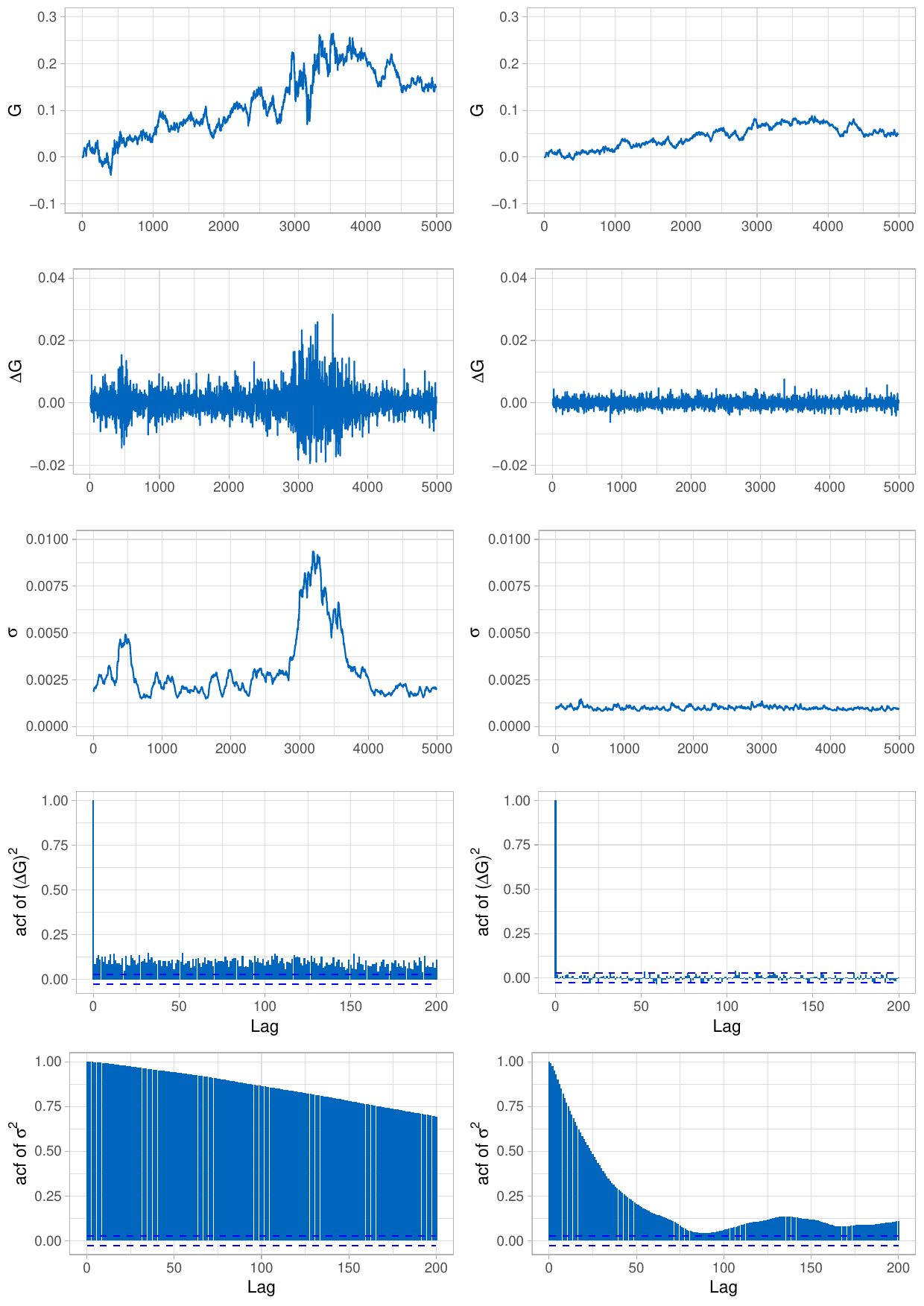}%
\caption{Simulation of the FICOGARCH$(1,d,1)$ process $G$ (top) with corresponding
return process $\Delta G$ (second row), volatility process $\sigma$ (third row) and sample acf $\widehat \gamma_\sigma$ of $\sigma$ (bottom) driven by a compound Poisson process $L$ with rate 5 and normally distributed jump sizes with mean zero and variance one half. 
The model parameters are $\alpha_0 = 0.0195, \alpha_1 = 0.0105, \beta_1 = 0.0513$ and $a=1.$ The fractional parameter $d$ is equal to $-0.01$ (left) and $-0.4$ (right). 
}
\label{fig:ficogarch}
\end{figure}

To derive a stationary version of the volatility process in our new model, we need some sample path properties of the fractional subordinator \eqref{eq:fracSvola}, which we present in a more general context in the next subsection.

\subsection{Sample path properties of fractional subordinators}

We first recall from \cite[Proposition~3.10]{ContTankov2003} that a subordinator $S:=(S_t)_{t\ge0}$ is a L\'evy process with characteristic triple $(\gamma_S,0,\nu_S)$, where $\gamma_S\ge0$ represents the drift of the process, it has no Gaussian component, and its L\'evy measure satisfies $\nu_S((-\infty,0])=0$, and as a consequence, it has almost surely increasing sample paths of finite variation.
In this section we will present some properties of a general  fractional subordinator 
\begin{align}\label{eq:fracSvolagen}
	S^{a,d}_t = \int_\R f_{a,d}(t,u)\; \mathrm{d}S_u, \qquad t\geq 0,
\end{align}
 where $f_{a,d}$ is the modified MvN kernel and $(S_t)_{t\in\R}$ is a two-sided subordinator defined from $S$ as in \eqref{2L}. 

The continuity of the sample paths of $(S^{a,d}_t)_{t\in\R}$ will follow from a representation of $S^{a,d}_t$ as improper Riemann integral. To derive such a representation, we will use the fact that  
  \begin{align*}
  S_t^{a,d} =  \begin{cases}
   \int_{-\infty}^t \Big((a+(-s))^d - (a+(t-s))^d \Big) \mathrm{d}S_s 
   + \int_t^0 \Big( (a+(-s))^d - a^d\Big) \mathrm{d}S_s & \mbox{for } t<0,\\
   \int_{-\infty}^0 \Big((a+(-s))^d - (a+(t-s))^d \Big) \mathrm{d}S_s 
   + \int_0^t \Big(a^d -(a+(t-s))^d \Big) \mathrm{d}S_s & \mbox{for } t\ge 0.
   \end{cases}
  \end{align*}
The following result is then an analogue of \citet*[Theorem~3.4]{Marquardt2006} for a fractional L\'evy process.

 \begin{proposition}\label{prop3-modsub}
Let $S$ be a  subordinator with $\E(S_1^2)<\infty$,  $d\in(-0.5,0)$ and $a>0$.
 Then $(S^{a,d}_t)_{t\in\R}$ as defined in \eqref{eq:fracSvolagen} has a modification that equals the improper Riemann integral
  \beam\label{eq:Riemann} 
  S_t^{a,d} = d \int_\R  \Big((a+(-s)_+)^{d-1} - (a+(t-s)_+)^{d-1}\Big)  S_s \mathrm{d}s - d a^{d-1}\int_0^t  S_s \mathrm{d}s,\quad t\in\R.
  \eeam
Moreover, \eqref{eq:Riemann} 
is continuous in $t$ and has sample paths of finite variation. 
\end{proposition}


Recall from \citet{ContTankov2003}, Corollary~3.1 and Proposition~3.10, that a subordinator $S$ with strictly increasing sample paths has representation $S_t=\gamma_S t +\sum_{0<u\le t}\Delta S_u$, for $t\ge0$; i.e. it has 
positive drift $\gamma_S>0$ and positive jumps of finite variation.
If this is the case, then the fractional subordinator $(S_t^{a,d} )_{t\ge0}$ also has strictly increasing sample path. 
This result is summarised in the following proposition. In addition we consider the a.s. limit behaviour of the two-sided process $(S^{a,d}_t)_{t\in\R}$.

\begin{proposition}\label{prop4-modsub}
Let $S$ be a subordinator and  $d\in(-0.5,0)$ and $a>0$.
 \begin{enumerate}[(i)]
    \item If $S$ has strictly increasing sample paths, then also $(S^{a,d}_t)_{t\ge0}$ has strictly increasing sample paths.
    \item The two-sided process $(S^{a,d}_t)_{t\in\R}$ has stationary increments.
    \item {If $\E(S_1)<\infty$,} then the following strong law of large numbers (SLLN) holds for the two-sided process $(S_t^{a,d} )_{t\in\R}$:
        \begin{align}\label{FLP24}
            \lim_{t\to \pm \infty}\frac1{|t|} \Big(S^{a,d}_t-\E(S_1)\int_\R f_{a,d}(t,u)\mathrm{d}u\Big)= 0\quad \mbox{a.s.}
        \end{align}
 \end{enumerate}
\end{proposition}

\subsection{Stationarity of the FICOGARCH\texorpdfstring{$(1,d,1)$}{(1,d,1)}}\label{s41}

To construct a stationary version of the FICOGARCH$(1,d,1)$ volatility model, we extend the squared volatility process of \eqref{eq:sdeVolaMvNficogarch} to the whole of $\R$ and use the SLLN from \eqref{FLP24}.
In this model the fractional subordinator \eqref{eq:fracSvolagen} is driven by $S=[L,L]^{(d)}$.
Then the auxilliary process \eqref{aux} is extended to
\beam\label{auxr}
  X_t = \beta_1 t-\alpha_1 S_t^{a,d}\,,\quad t\in\R\,.
\eeam
By Proposition~\ref{prop3-modsub} $(S^{a,d}_t)_{t\in\R}$ has a modification with finite variational and continuous sample paths, and hence $(X_t)_{t\in\R}$ has.

\begin{proposition}
\label{stat_limit}
{Let $S=[L,L]^{(d)}$} be the discrete part of the quadratic variation of a L\'evy process $L$. For $d\in(-0.5,0)$ and $a>0$ let $(S_t^{a,d})_{t\in\R}$  be the fractional subordinator as in \eqref{eq:fracSvolagen} and
 $(X_t)_{t\in\R}$ be the auxiliary process from \eqref{auxr} and let $\beta_1/\alpha_1 > a^d \E ([L,L]_1^{(d)})$. 
Then for $-\infty\le v<\infty$, the integral 
    \[
    \int_v^t e^{-(X_t-X_s)} \mathrm{d}s,\quad t>v,
    \]
exists a.s. as a Riemann integral.
\end{proposition}


\begin{lemma}
\label{stat_sigma_tilde}
Define
$$\widetilde\sigma^2_t := \int_{-\infty}^t e^{-(X_t-X_s)} \mathrm{d}s\,,\quad t\in\R\,.$$
Then for all $t_1<\cdots < t_m$, $m\in\N$, and $h\in\R$, we have
  \[
    (\widetilde\sigma^2_{t_1},\ldots,\widetilde\sigma^2_{t_m})\eqd (\widetilde\sigma^2_{t_1+h},\ldots,\widetilde\sigma^2_{t_m+h}).
    \]
\end{lemma}

\noindent
As a consequence of this general result, if in the solution \eqref{eq:solSdeVol}, we define $\sigma_0^2:=\alpha_0\beta_1\int_{-\infty}^0 e^{X_s} \mathrm{d}s$ , we get 
    \[
    \sigma_t^2 = \alpha_0\beta_1\int_{-\infty}^t e^{-(X_t-X_s)} \mathrm{d}s\,,\quad t\in\R,
    \]
which is a stationary process. The main theorem follows now from the preceding results
and the fact that $L$ has independent and stationary increments, which implies stationary increments of the price process $G$ as long as the volatility is a stationary process.

\begin{theorem}
\label{thm_stat_ficogarch} Let $L$ be a L\'evy process with $\E(L_1^4)<\infty$ and $([L,L]_t^{(d)})_{t\in\R}$ be the two-sided version of the discrete part of the quadratic variation process of $L$.
Let $d\in(-0.5,0)$ and $a>0$ and
assume that the parameters $\alpha_0,\alpha_1,\beta_1>0$ satisfy $\beta_1/\alpha_1 > a^d \E([L,L]_1^{(d)})$.
Let the squared FICOGARCH$(1,d,1)$ volatility process $(\sigma^2_t)_{t\geq 0}$ be given as in \eqref{eq:solSdeVol} with $\sigma_0^2 \eqd \alpha_0\beta_1\int_{-\infty}^0 e^{X_s} \mathrm{d}s$ independent of $L$.
Then $(\sigma^2_t)_{t\ge 0}$ is strictly stationary. 
Moreover, the process $(G_t)_{t\geq 0}$ as defined in \eqref{eq:PriceMvNficogarch} has stationary increments.
\end{theorem}

Our new approach extends immediately also to higher order COGARCH models.

\begin{remark}
The COGARCH($p,q$) process was introduced in \citet*{Brockwell2006}. By the same procedure as above we can generalise the FICOGARCH$(1,d,1)$ model in a straightforward way to its higher order analogue as follows.

Let $p$ and $q$ be integers such that $q\geq p\geq 1$. 
Further let $\alpha_0, \alpha_1,\dots, \alpha_p\in\R, \beta_1,\dots,\beta_q\in\R, \alpha_p\not=0,\beta_q\not=0$ and $\alpha_{p+1}=\cdots =\alpha_q=0$. Then we define the $q\times q$ matrix $\mathcal B$ and the vectors $\mathbf a$ and $\mathbf 1_q$ by
  \[
  {\mathcal B} = \begin{pmatrix}
  0 & 1 & 0 & \cdots & 0\\
  0 & 0 & 1 & \cdots & 0\\
  \vdots & \vdots & \vdots & \ddots & \vdots\\
  0 & 0 & 0 & \cdots & 1\\
  -\beta_q & -\beta_{q-1} & -\beta_{q-2} & \cdots & -\beta_1
  \end{pmatrix},\ 
  {\mathbf a} = \begin{pmatrix}
  \alpha_1\\ \alpha_2\\ \vdots\\ \alpha_{q-1}\\ \alpha_q
  \end{pmatrix},\
  {\mathbf 1_q} = \begin{pmatrix}
  0\\ 0\\ \vdots\\ 0\\ 1
  \end{pmatrix},
    \]
with $\mathcal{B}:= -\beta_1$ if $q=1$. 
Then for a L\'evy proceess $L$ with $\E(L_1^4)<\infty$, we define the squared volatility process $(\sigma^2_t)_{t\geq 0}$ with parameters $\mathcal{B}, {\mathbf a}, \alpha_0$ by
 \[
 \sigma^2_t = \alpha_0 + {\mathbf a}^\top {\mathbf Y_t},\qquad t\geq 0,
 \]
where the state process $({\mathbf Y}_t)_{t\geq 0}$ is the unique solution of the SDE
  \[
  \mathrm{d}{\mathbf Y}_t = \mathcal{B}{\mathbf Y}_t \mathrm{d}t + {\mathbf 1_q}(\alpha_0 + {\mathbf a}^\top {\mathbf Y}_t) \mathrm{d}S_t^{a,d},\qquad t>0,
  \]
with initial value ${\mathbf Y}_0$, independent of $L$.
Furthermore, $(S^{a,d}_t)_{t\geq 0}$ is the fractional subordinator as defined in equation \eqref{eq:fracSvola}. 
If the process $(\sigma^2_t)_{t\geq 0}$ is strictly stationary and almost surely non-negative, we define the FICOGARCH($p,d,q$) process $(G_t)_{t\geq 0}$ with parameters $\mathcal{B}, {\mathbf a}, \alpha_0$ and some initial value $G_0$ as the solution of the SDE
  \[
  \mathrm{d}G_t = \sigma_{t}\mathrm{d}L_t, \qquad t>0.
  \]
The volatility process of the COGARCH$(1,1)$ and also of the FICOGARCH$(1,d,1)$ is non-negative by definition. 
This is not necessarily the case for the COGARCH$(p,q)$ model for $q\ge p>1$. 
Therefore, conditions as formulated in \citet[Theorem 5.1]{Brockwell2006} have to be considered to assure non-negativity of the volatility process.
\end{remark}

\section{An application example}
\label{s3}

\subsection{Parameter estimation}\label{subsec:param_est}

Estimation of the FICOGARCH model is not as straightforward as e.g. for the COGARCH(1,1) model. There the
dependence structure of the squared returns $(\Delta G_{\Delta i})^2 := (G_{\Delta i} - G_{\Delta (i-1)})^2$ is explicitly
known and has been used to define a method of moment estimator (\cite{Haugetal2007}), a prediction based estimator (\citet*{BibbonaNegri2015}), an indirect inference estimator (\citet*{Thiago}) or a pseudo-maximum-likelihood estimator (\citet*{Malleretal2008}).  

Fortunately we are able to simulate the FICOGARCH model. This allows us to apply a simulation based version of the generalised method of moments (GMM), due to \cite{mcfadden1989}. The idea is to compute  
moments for the observed return data, and match them with empirical moments computed from simulated data of the FICOGARCH model. The method is described in detail in the algorithm below. 

We illustrate the estimation for equally spaced observations of the price process $G_{\Delta i}, i = 0,\ldots,n$, giving return data
$\Delta G_{\Delta i} = G_{\Delta i} - G_{\Delta (i-1)},$ $i = 1,\ldots,n$, with true parameter $\theta^0 = (\alpha_0^0,\alpha_1^0,\beta_1^0,d^0) \in \Theta$, where $\Theta = (0,\infty)^3 \times (-0.5,0)$ denotes the parameter space. \\

\noindent \textbf{Algorithm}

\begin{enumerate}

\item[(1)] Compute the moment estimator 
\[
\widehat{\mu}_n := \frac{1}{n} \sum_{i=1}^n (\Delta G_{\Delta i})^2
\]
and for fixed $s \geq 3$ the empirical autocovariances $\widehat{\gamma}_n:= (\widehat{\gamma}_n(0),\widehat{\gamma}_n(1),\ldots,\widehat{\gamma}_n(s))$  as

\[
    \widehat{\gamma}_n(h) 
        := \frac{1}{n}\sum_{i=1}^{n-h} \bigg( (\Delta G_{\Delta(i+h)})^2 - \widehat{\mu}_n\bigg)
            \bigg( (\Delta G_{\Delta i})^2 - \widehat{\mu}_n\bigg), \quad h = 0,\ldots,s.
\]

\item[(2)] Compute the empirical autocorrelations 
\[
\widehat{\rho}_n =(\widehat{\rho}_n(1),\dots,\widehat{\rho}_n(s)):= (\widehat{\gamma}_n(1)/\widehat{\gamma}_n(0),\ldots,\widehat{\gamma}_n(s)/\widehat{\gamma}_n(0)).
\]

\item[(3)] For $\theta = (\alpha_0,\alpha_1,\beta_1, d) \in \Theta $ simulate a sample path $(\Delta G_{\Delta i}(\theta))_{i =0,\ldots,n}$ of return data. \\
Repeat steps (1) and (2) to obtain $\widehat{\mu}_{n,\theta}$ and $\widehat{\rho}_{n,\theta}$.

\item[(4)] Define the function $\mathcal{L}^{(s)}(\theta)$ by
\begin{equation}\label{eq:score_iim}
\mathcal{L}^{(s)}(\theta) =  \sum_{h=1}^s ( \widehat{\rho}_{n,\theta}(h) - \widehat{\rho}_n(h))^2 + (\widehat{\mu}_{n,\theta} - \widehat{\mu}_n)^2
\end{equation}
The simulation-based GMM estimator $\widehat{\theta}_n$ of $\theta^0$ is given by:
\begin{equation*}
\widehat{\theta}_n = \mathrm{argmin}_{\theta \in \Theta} \mathcal{L}^{(s)}(\theta).
\end{equation*}

\end{enumerate}

Before commenting on some practical issues of the above algorithm we present a useful simplification.

\begin{lemma}\label{remark:scale_property_ficogarch}
Let $\Delta G_t = G_t - G_{t-\Delta}$ be an increment of a stationary FICOGARCH$(1,d,1)$ process with parameter $(\alpha_0,\alpha_1,\beta_1,d)$ satisfying the conditions of Theorem \ref{thm_stat_ficogarch}. Then, for any $k>0$, the scaled increment $k\cdot \Delta G_t$ has the same distribution as the increment of a FICOGARCH$(1,d,1)$ with parameter $(k^2\alpha_0,\alpha_1,\beta_1,d)$.
\end{lemma}

This result follows from the fact that the stationary squared volatility process of the scaled FICOGARCH$(1,d,1)$ process $(k G_t)_{t\geq 0}$ is given by

\[
    (k\sigma_{t})^2 = k^2\alpha_0 \beta_1 \int_{-\infty}^t e^{-(X_t-X_s)}\mathrm{d}s,\qquad t\in\R.
\]
Since $(X_t)_{t\in\R}$ is independent of $\alpha_0$, it follows that
    \[
    \Delta G(\alpha_0,\alpha_1,\beta_1, d) \overset{d}{=} \sqrt{\alpha_0}\Delta G(1,\alpha_1,\beta_1, d)\,.
    \]
Hence, 
$\rho_{[\Delta G(\alpha_0,\alpha_1,\beta_1, d)]^2}(h) = \rho_{[\Delta G(1,\alpha_1,\beta_1, d)]^2}(h)$ for every $h \geq 0$,
where $\rho_{[\Delta G(\theta)]^2}$ denotes the autocorrelation function of squared increments
of a stationary FICOGARCH$(1,d,1)$ process with parameter $\theta$.

The minimisation of the score function (\ref{eq:score_iim}) is therefore performed in two steps. First we keep $\alpha_0 = 1$ fixed and minimise the first term in \eqref{eq:score_iim} with respect to $(\alpha_1,\beta_1,d)$.
Second, since we have $\widehat{\mu}_{n,(\alpha_0,\alpha_1,\beta_1,d)} = \alpha_0\widehat{\mu}_{n,(1,\alpha_1,\beta_1,d)}$
by Lemma \ref{remark:scale_property_ficogarch}, the second term in \eqref{eq:score_iim} is minimised by choosing: 
    \[
    \widehat{\alpha}_0 = \frac{\widehat{\mu}_n}{\widehat{\mu}_{n,(1,\widehat{\alpha}_1,\widehat{\beta}_1,\widehat{d})}}\,.
    \]

We now proceed to investigate the small sample behaviour of the proposed estimator. We simulate 5\,000 equidistant observations of $\Delta G$ with $\Delta = 1$. The driving L\' evy process $L$ is compound Poisson with rate one and standard normally distributed jumps. The cut-off for the computation of the empirical acf has been chosen as $s = 80$ lags. Table~\ref{table:iim_ficogarch} summarises  the estimation results for $N=100$ simulation runs. The five examples shown in the table only differ in $d$, which varies between -0.01 and -0.49 as indicated in the right column of the table. The estimation of the parameters $\beta_1$ and $d$ becomes less efficient as $d$ decreases towards $-0.49$. This is due to the fact that, when $d$ decreases, the model becomes less dependent so that the acf of the squared returns is less informative about the model parameters. The estimation results for the other two parameters are not affected by the decrease of $d$ and are overall satisfying.\\

\begin{table}
\centering
\begin{tabular}{lllll}
  \hline
&$\alpha_0$& $\alpha_1$ & $\beta_1$ & $d$\\
  \hline
TrueValue&0.04000&0.08000&0.34000&-0.01000\\
Mean&0.05093 (0.01932)&0.09080 (0.03621) &0.35720 (0.16520) &-0.14400 (0.14914)\\
RMSE&0.00049&0.00143&0.02759&0.04020\\
  \hline
TrueValue&0.04000&0.08000&0.34000&-0.10000\\
Mean & 0.04218 (0.02751) & 0.08920 (0.03045) & 0.31720 (0.12431) &-0.12400 (0.13566)\\
RMSE&0.00076&0.00101&0.01597&0.01898\\
  \hline
TrueValue&0.04000&0.08000&0.34000&-0.25000\\
Mean&0.04798 (0.01267) &0.13640 (0.07894) &0.57480 (0.31452) &-0.29400 (0.16169)\\
RMSE&0.00022&0.00941&0.15405&0.02808\\
  \hline
TrueValue&0.04000&0.08000&0.34000&-0.40000\\
Mean&0.05028 (0.01425)&0.10920 (0.09130) &0.72360 (0.29278) & -0.29400 (0.16415)\\
RMSE&0.00031&0.00919&0.23287&0.03818\\
  \hline
TrueValue&0.04000&0.08000&0.34000&-0.49000\\
Mean&0.04802 (0.01293) &0.09800 (0.09364) &0.73640 (0.30812)&-0.27400 (0.17523)\\
RMSE&0.00023&0.00909&0.25207&0.07736\\
   \hline
\end{tabular}
\caption{Empirical mean, standard deviation (in brackets)  and root mean squared error (RMSE) of the estimated FICOGARCH$(1,d,1)$ parameters.}\label{table:iim_ficogarch}
\end{table}

\subsection{The FICOGARCH model fitted to FOREX data }

The FICOGARCH$(1,d,1)$ model has been fitted to log-returns of two different exchange rate data, namely EUR (Euro)/USD (United States Dollar) and GBP (British Pound)/USD. The data set have a total length of 4\,436 (from December 10, 1999 to December 12, 2016) and 10\,436 (from December 10, 1976 to December 12, 2016) data points, respectively.

\begin{figure}[h]
\centering
  \includegraphics[width=14.5cm]{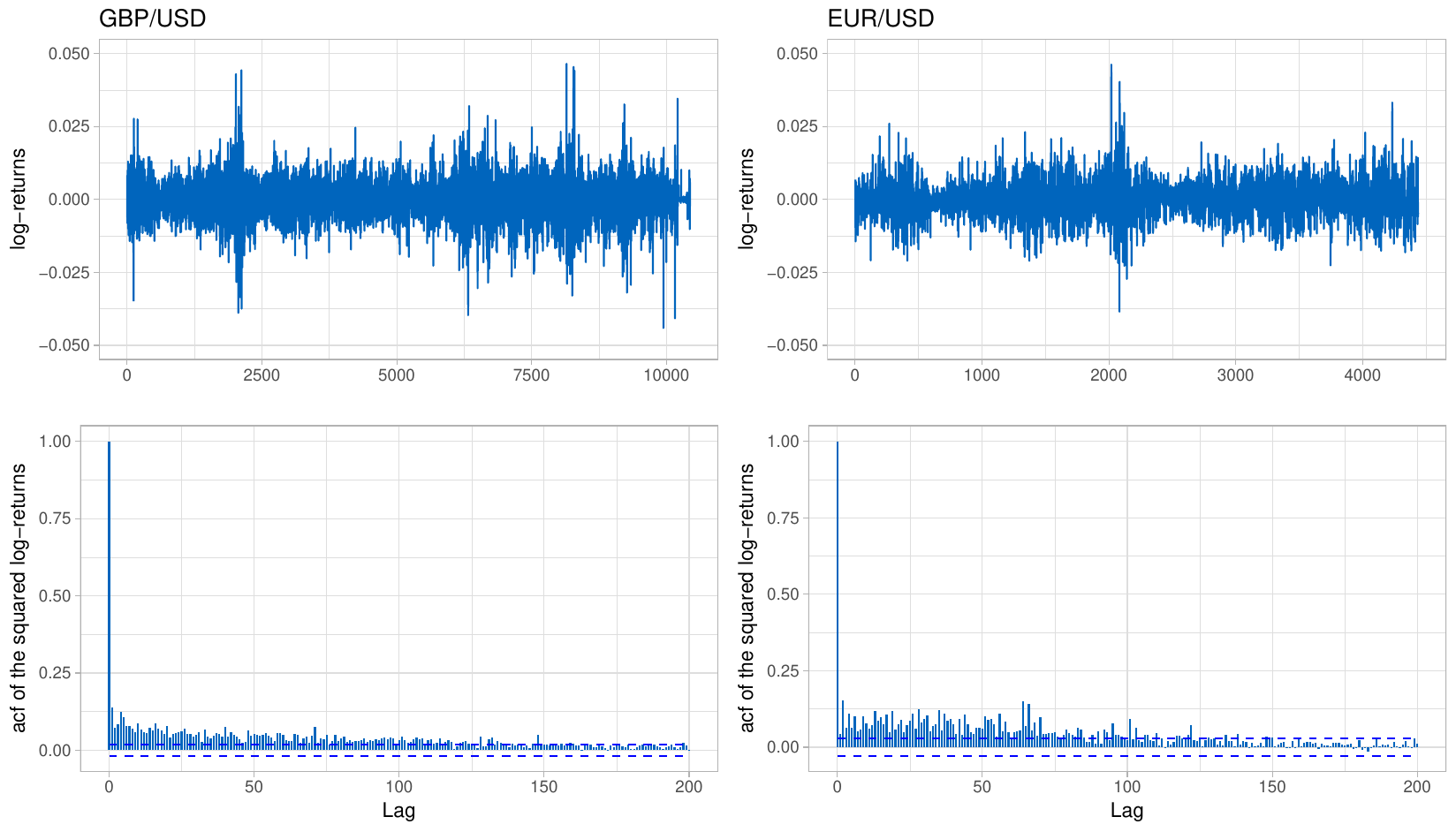} 
\caption{Log-returns of foreign exchange rates (\textit{top}) and sample acf of
squared log-returns (\textit{bottom}) for GBP/USD (\textit{left}) and USD/EUR 
(\textit{right}) exchange rates.}
\end{figure}

The sample autocorrelation function of the squared log-returns indicates some moderately long memory behaviour as it is described by the FICOGARCH model; cf. the corresponding plots in Figure~\ref{fig:ficogarch}. 

We applied the Algorithm of Section \ref{subsec:param_est} to each data set to estimate the parameters $\alpha_0,\alpha_1,\beta_1$ and $d$ of the FICOGARCH(1,d,1) model. The parameter $a$ of the fractional process $(S^{a,d}_t)_{t\in\R}$ was set equal to 1 in both cases. 
For the simulation part the driving L\'evy process $L$ was chosen to be a compound Poisson process with rate one and standard normally distributed jumps. The estimated parameters are shown in Table \ref{table:forex_ficogarch}. The last column represents the realised minimum of the score function $\mathcal{L}^{(s)}(\hat\theta)$. 

\begin{table}[h]
\centering
\begin{tabular}{rrrrrr}
  \hline
  \hline\\[-3mm]
&$\widehat{\alpha}_0$&$\widehat{\alpha}_1$&$\widehat{\beta}_1$&$\widehat{d}$ & $\mathcal{L}^{(s)}(\hat\theta)$ \\
\hline
GBP/USD & 1.7656e-6  &  0.0274  & 0.1099  & -0.0300 & 0.0830 \\
EUR/USD & 1.2295e-5  &  0.0390  & 0.1460  & -0.0150 & 0.1607 \\
   \hline
   \hline
\end{tabular}
\caption{Parameter estimates $\widehat{\alpha}_0$, $\widehat{\alpha}_1$, $\widehat{\beta}_1$, $\widehat{d}$ for GBP/USD and EUR/USD exchange rate data.}\label{table:forex_ficogarch}
\end{table}

The estimated values of $d$ are  rather close to 0 for both series, which indicates that the fitted FICOGARCH model has an acf with a rather slow decay. The estimated models are stationary, since the ratios $0.1099/0.0274$ and $0.1460/0.0390$ are both larger than $1 = a^{\widehat d}\E([L,L]_1)$. The estimates for $\alpha_0$ are both close to zero, which is due to the scale of the data (cf. Lemma~\ref{remark:scale_property_ficogarch}).

\section{Fractional subordinators in stochastic volatility models}\label{s4}

In this section we discuss the fractional subordinator as defined in \eqref{eq:fracSvolagen} with its specification \eqref{eq:fracSvola} as driving process of the FICOGARCH$(1,d,1)$ squared volatility.
The non-negativity needed for the squared volatility can be achieved in various ways, one possibility consists in the integral transform of a subordinator with a positive kernel as the  Ornstein-Uhlenbeck type volatility model of \citet{Barndorff-Nielsen2001}.
Long range dependence is then introduced by replacing the OU kernel by a fractional kernel as suggested for instance in \cite{anh2002}, or by taking the integral transform of a fractional subordinator with an appropriate kernel as in \citet*{Brockwell2005}, Section~8.
Also time-changed subordinators as in \citet*{Carr2003} can be modified to introduce long range dependence.
 \citet{Bender2009} replaces classical time-change models by the integral transform of a subordinator with a MG-kernel as in \eqref{mgkernel}  which, when used as a random time change process, also results in a long range dependence model.  



As explained in Section~1.2, the classical MG-kernel in \eqref{mgkernel} and MvN-kernel in \eqref{mvnkernel} have certain drawbacks, which make them inappropriate for fractional subordinator modelling for long range dependence in stochastic volatility.
The MG-fractional subordinator $(\tilde{L}_t^d)_{t\geq 0}$ defined in \eqref{fracsub} does not necessarily lead to stationarity of the squared volatility process.
The MvN-fractional subordinator $(L^d_t)_{t\in\R}$ defined in \eqref{fracsub}
is  well-defined for any subordinator $S$, if $d\in (-0.5,0)$.
But due to the singularity at $s=t$ 
$(S^d_t)_{t\in\R}$ has discontinuous and unbounded sample paths with positive 
probability (cf. \citet[Theorem 4]{Rosinski1989}).
To overcome these drawbacks, we bound the MvN-kernel at its singularities. 

Observe that for every $t\in\R$ the MvN-kernel $\mvn(t,\cdot)$ is up to a constant given by the function $s\mapsto g_d(t-s)-g_d(-s)$, where $g_d$ is defined by $g_d(x):=x^d_+$. 
This suggests to bound $g_d$ at its singularities $s=0$ and $s=t$ by incorporating a shift $a>0$, leading to
the following definition of a modification of the Mandelbrot-van-Ness kernel.

\pagebreak

\begin{definition}
Let $d\in (-0.5,0)$ and $a>0$. For each $t\in\R$ the \emph{(non-normalised) modified MvN-kernel} is given by
\begin{align}\label{eq:modMvnKernel}
\begin{aligned}
f_{a,d}(t,s) &= g_{a,d}(-s) - g_{a,d}(t-s) \\
			& := \left(a+(-s)_+\right)^d - \left(a+(t-s)_+\right)^d, \qquad s\in\R.
\end{aligned}
\end{align}
\end{definition}

\begin{figure}[htbp]
\begin{center}
\includegraphics[width=16cm,height=7cm]{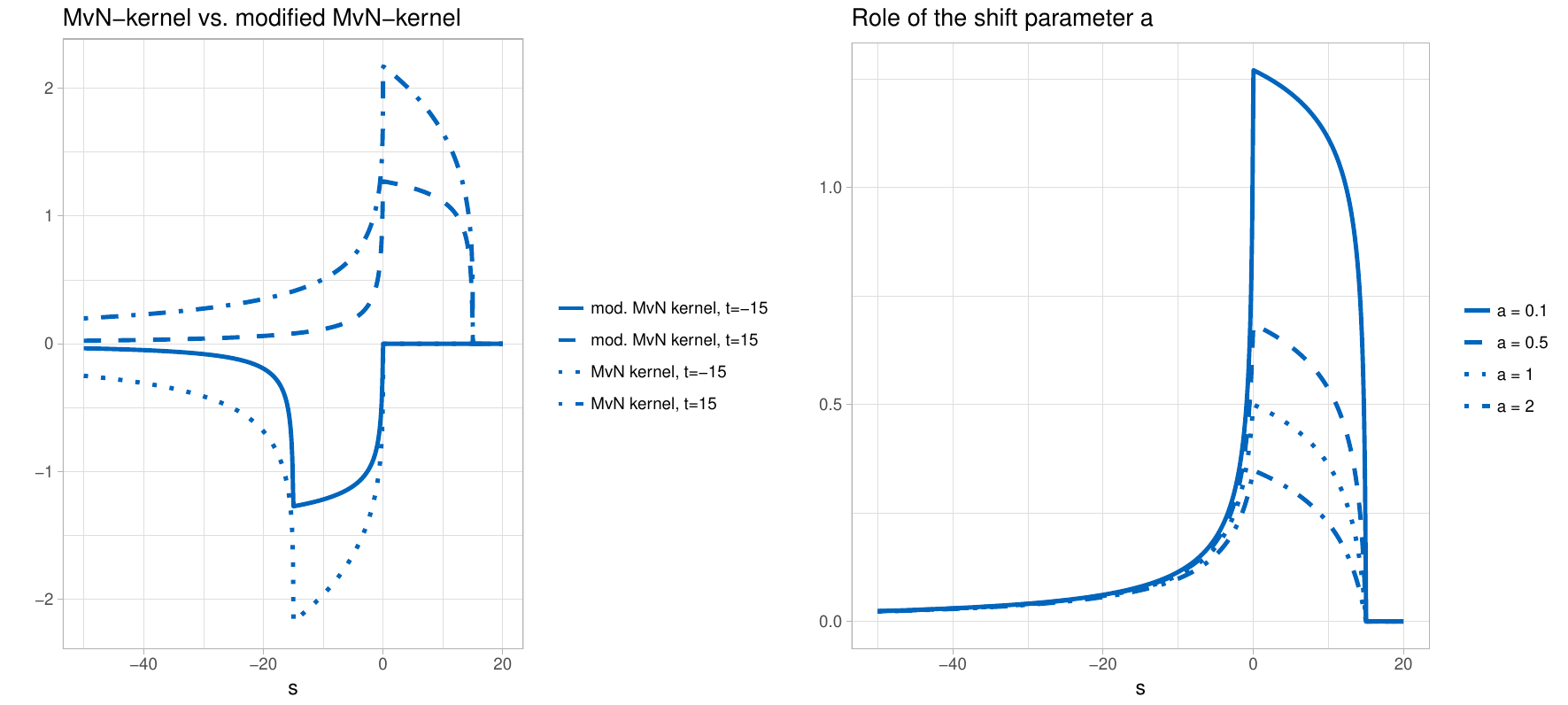}
\end{center}
\caption{Left: Comparison of the MvN kernel $\mvn(t,\cdot)$ and the modified MvN kernel $f_{a,-d}(t,\cdot)$ for $d=0.25, a=0.1$. Right: Modified MvN kernel $f_{a,d}(t,\cdot)$ with $d=-0.25$ for different values of the shift parameter $a$ for $t=15$.}
\end{figure}

Some properties of the new kernel are summarised in the next proposition. 
\begin{proposition}
\label{prop1-modsub}
For  $d\in (-0.5,0)$ and $a>0$ consider the modified MvN-kernel $f_{a,d}$ as in \eqref{eq:modMvnKernel}. Then the following holds.
\begin{enumerate}[(i)]
    \item $f_{a,d}(t,\cdot) \geq 0$ for all $t\ge 0$,
	\item $f_{a,d}(t,\cdot)$ is continuous for all $t\in\R$,
	\item $|f_{a,d}(t,s)| \le a^d$ for all $t,s\in\R$,
 \item $|f_{a,d}(t,s)| \sim |s|^{d-1} |dt|$\  as $s\to-\infty$  (equivalently, $\lim\limits_{s\to -\infty}{|f_{a,d}(t,s)|}/{(|s|^{d-1}|dt|)} = 1$) for all $t\in\R$.
 \item $f_{a,d}(t,\cdot)\in L^\delta (\R)$ for all $\delta>1/|d-1|$;
in particular, $f_{a,d}(t,\cdot)\in L^1(\R)\cap L^2(\R)$ for all $t\in\R$.
\end{enumerate}
\end{proposition}

The new kernel can now be used to define a fractional subordinator analogously to \eqref{fracsub}. The existence
of such an integral is shown in the following proposition.

\begin{proposition}
\label{prop2-modsub}
Let $a>0$ and $d\in (-0.5,0)$, and let $S$ be a subordinator satisfying $\E(S_1^2)<\infty$. Then
the fractional subordinator
\begin{align}
\label{frac_sub_modmvn}
	S^{a,d}_t = \int_\R f_{a,d}(t,u)\; \mathrm{d}S_u
\end{align}
exists for all $t\in\R$ as limit in the $L^2(\Omega)$-sense and hence in probability.

\end{proposition}

\pagebreak

\begin{remark}
Some related kernels have been used in the literature, and we want to comment on some of them.
\begin{itemize}
\item[(a)] For comparison, recall from above that the MvN kernel is for $d\in(-0.5,0)$ and $t\in\R$ defined as
 \[
 \mvn(t,s)=\frac{1}{\Gamma(1+d)} (g_d(t-s)-g_d(-s))= \frac{-1}{\Gamma(1+d)} \frac{\mathrm{d}}{\mathrm{d}s}\int_s^\infty \mathds1_{(0,t)} (v) (v-s)^{d} \mathrm{d}v,\quad t,s\in\R,
 \]
see \citet[Lemma 1.1.3]{Mishura2008}. 
As it is in $L^2(\R)$, this kernel can generate fBm and fLm on $\R$ for symmetric driving processes.
\item[(b)] The approach in \citet*{Brockwell2005} to construct a FICARMA process driven by a subordinator suggests to use the kernel
  \[
   \frac{-1}{\Gamma(1+d)} \frac{\mathrm{d}}{\mathrm{d}s}\int_s^\infty \mathds1_{(0,t)} (v) \min (a^d, (v-s)^{d}) \mathrm{d}v,\quad t,s\in\R,
  \]
for $d\in(-0.5,0)$ and some $a>0$. For us it was computationally advantageous to bound $g_d$ and not the integrand in the above representation. 
\item[(c)] \citet*{meerschaertSPA} suggest a tempered fractional kernel of the form
  \[
f_d^{\rm temp}(t,s) :=  e^{-\lambda (t-s)_+} (t-s)_+^d - e^{-\lambda (-s)_+} (-s)_+^d ,\quad t,s\in\R,
  \]
for {$d > -0.5$} and $\lambda>0$, which also has an integral representation using tempered fractional integrals, see \citet[Definition 2.1]{meerschaertSPA} for details. For every $t\in\R$ the kernel $f_d^{\rm temp}(t,\cdot)$ belongs to  $L^p(\R)$ for all $p\geq 1$. 
The autocovariance function of the increments of the corresponding tempered fractional process decreases for small lags algebraically, but for large lags exponentially.  It is therefore called a semi-long range dependence model.
\item[(d)] \citet*{KluMatsui} generalise fLp's  by allowing the kernel to be regularly varying, which results in functional central limit theorems for scaled Ornstein-Uhlenbeck processes driven by such generalized fLp's. 
\end{itemize}
\end{remark}


\subsection{Cumulant generating function and moments}

Let $S$ be a subordinator without drift, then its cumulant generating function is given by
$\ln \E(e^{h S_1})= \int_0^\infty (e^{h z}-1)\nu_S(\mathrm{d}z)$, where $\nu_S$ is the L\'evy measure of $S$ (see Chapter~4.2.2 in \citet{ContTankov2003}). 
\citet*[Proposition~2.6]{Rajput1989} {expresses} the cumulant generating function of  $(S^{a,d}_t)_{t\in\R}$ as
  \[
  \ln \E(e^{h S_t^{a,d}}) = \ln \E\Big( \exp\Big\{ h \int_\R f_{a,d}(t,u)\; \mathrm{d}S_u\Big\}\Big) 
  = \int_\R \int_0^\infty (e^{h f_{a,d}(t,u)z}-1) \nu_S(\mathrm{d}z) \mathrm{d}u.
  \]
The $k$-th cumulant of $S_t^{a,d}$ is then given by
$\kappa^k(S_t^{a,d})=\frac{d^k}{dh^k} \ln \E(e^{h S_t^{a,d}})\big|_{h =0}$. For the $k$-th derivative in 0 we obtain, provided the corresponding L\'evy cumulant exists,
  \[
  \kappa^k (S_t^{a,d}) = \int_0^\infty  z^k\nu_S(\mathrm{d}z) \int_\R f_{a,d}^k(t,u) \mathrm{d}u = \kappa^k (S_1) \int_\R f_{a,d}^k(t,u) \mathrm{d}u.
  \]
where the integral exists by Proposition~\ref{prop1-modsub} (v) for all $k>1$.
From this we calculate the mean and variance as 
  \begin{align*}
  \E(S_t^{a,d}) &= \E(S_1) \int_\R f_{a,d}(t,u) \mathrm{d}u,\\
\var(S_t^{a,d})  &= \var(S_1) \int_\R f_{a,d}^2(t,u) \mathrm{d}u.
  \end{align*}

\subsection{Properties of the increments of the fractional subordinator}

Next we ask whether $f_{a,d}$ serves its purpose in the sense that the increments of $(S^{a,d}_t)_{t\in\R}$ exhibit a long range dependence structure. 
We start with an auxiliary result.

\begin{lemma}\label{lem:fquad}
Let $d\in(-0.5,0)$ and $a>0$. 
Then the modified MvN-kernel $f_{a,d}(t,\cdot)$ as in \eqref{eq:modMvnKernel} satisfies for $t > 0$
	\[
	\int_\R f_{a,d}^2(t,u)\; \mathrm{d}u 
	= C + a^{2d}\; t - \frac{2a^d}{d+1} (t+a)^{d+1} + \frac{1}{2d+1} (t+a)^{2d+1} + c(t)  t^{2d+1} 
	\]
with 
 \beam\label{eq:fc}
 C = a^{2d+1} \Big(\dfrac{2}{d+1} - \dfrac{1}{2d+1} \Big)\quad\mbox{and}\quad
 c(t) = \int_{-\infty}^{-a/t} \left[(1-y)^d - (- y)^d\right]^2\; dy,
 \eeam
 and
\beam\label{eq:asymptCt}
\lim_{\tto} c(t)  = \frac{\Gamma(d+1)}{\Gamma(2d+2)\sin(\pi(d+0.5))}
   +\frac{1}{2d+1}.
   \eeam 
\end{lemma}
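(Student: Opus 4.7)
The plan is to exploit the piecewise structure of $f_{a,d}(t,\cdot)$: the kernel vanishes for $u \ge t$ since both $(-u)_+$ and $(t-u)_+$ are zero there, so I would split $\int_\R f_{a,d}^2(t,u)\,du = \int_{-\infty}^0 + \int_0^t$ and handle the two pieces separately.

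For the integral over $[0,t]$, where $f_{a,d}(t,u) = a^d - (a+t-u)^d$, the substitution $v = a + t - u$ together with the expansion $(a^d - v^d)^2 = a^{2d} - 2 a^d v^d + v^{2d}$ turns the integral into three elementary antiderivatives evaluated between $v = a$ and $v = a+t$; these produce the terms $a^{2d}t$, $-\frac{2a^d}{d+1}(t+a)^{d+1}$, $\frac{1}{2d+1}(t+a)^{2d+1}$, together with the constant $C$ of \eqref{eq:fc}, the last obtained by collecting the three boundary contributions at $v = a$. For the integral over $(-\infty,0]$, where $f_{a,d}(t,u) = (a-u)^d - (a+t-u)^d$, the substitution $u = tv$ pulls $t^d$ out of each summand and $t$ out of $du$, producing the desired prefactor $t^{2d+1}$; a subsequent shift $y = v - a/t$ rewrites the integrand as $[(1-y)^d - (-y)^d]^2$ with upper limit $-a/t$, which is exactly $c(t)$.

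For \eqref{eq:asymptCt}, since $-a/t \to 0^-$ as $t \to \infty$ and the integrand is nonnegative, monotone convergence yields
\begin{equation*}
\lim_{t\to\infty} c(t) = \int_{-\infty}^0 [(1-y)^d - (-y)^d]^2\,dy = \int_0^\infty [(1+z)^d - z^d]^2\,dz
\end{equation*}
after the substitution $z = -y$. I would then write the right-hand side as $\int_\R [(1-s)_+^d - (-s)_+^d]^2\,ds - \int_0^1 (1-s)^{2d}\,ds$, note that the second piece equals $1/(2d+1)$, and recognise the first piece as the squared normalising constant in the Mandelbrot-van-Ness representation of standard fractional Brownian motion at time~$1$; Euler's reflection formula together with a Beta-type identity then deliver the closed form in \eqref{eq:asymptCt}.

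I expect the main obstacle to be this last evaluation. A naive expansion $((1+z)^d - z^d)^2 = (1+z)^{2d} - 2z^d(1+z)^d + z^{2d}$ is of no direct use, since each of the three resulting integrals over $(0,\infty)$ diverges for $d \in (-1/2,0)$. A clean computation therefore requires analytic continuation of Beta integrals, for instance via $u = z/(1+z)$, which turns the integral into $\int_0^1 u^d(1-u^d)(1-u)^{-2d-2}\,du$ and demands careful regularisation near $u = 1$, or equivalently appeals to the classical derivation of the Mandelbrot-van-Ness normalising constant for fBm.
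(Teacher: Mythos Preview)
Your proposal is correct and follows essentially the same route as the paper: the paper also splits the integral at $0$ and $t$, evaluates the $[0,t]$ piece by direct expansion, and for the $(-\infty,0]$ piece performs the single substitution $y=(u-a)/t$, which is exactly the composition of your two substitutions $u=tv$ followed by $y=v-a/t$. For the limit the paper proceeds just as you suggest, writing $c(t)$ as $\int_{-\infty}^{1}\big[(1-y)_+^{d}-(-y)_+^{d}\big]^2\,dy$ minus $\int_{-a/t}^{0}[\cdots]^2\,dy$ minus $\int_0^1(1-y)^{2d}\,dy$ and identifying the first term with the Mandelbrot--van~Ness normalising constant, so no analytic continuation of Beta integrals is needed.
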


From this we are able to derive the asymptotic behaviour
of the autocovariance function of the increments of $(S^{a,d}_t)_{t\in\R}$. 

\begin{proposition}\label{prop5-modsub} 
Let $d\in(-0.5,0)$ and $a>0$, and let $S$ be a subordinator satisfying $\E(S_1^2)<\infty$.
Let $(S^{a,d}_t)_{t\in\R}$ be as defined in \eqref{frac_sub_modmvn}, then the following holds. 
    Let $\Delta>0$ be fixed and $s+\Delta\le t$ such that $t-s=h\Delta$ for some $h>0$.
    Then the two increments $S^{a,d}_{t+\Delta}-S^{a,d}_t$ and $S^{a,d}_{s+\Delta}-S^{a,d}_s$ of length $\Delta$ have covariance
		\[ 
		\gamma_\Delta(h) := \cov\left(S^{a,d}_{s+(h+1)\Delta}-S^{a,d}_{s+h\Delta}, S^{a,d}_{s+\Delta} - S^{a,d}_{s}\right),
		\]
    which satisfies 
		\begin{align*}
		\gamma_\Delta(h) \sim \var\big(S_1\big) |d| a^d  \Delta^2 \; (h\Delta+a)^{d-1},\quad h\to\infty.
		\end{align*} 
\end{proposition} 

 The increments of $(S^{a,d}_t)_{t\in\R}$ do not have long memory in the standard sense, since the autocovariance function 
is integrable. However, it decreases algebraically, and for $d$ close to zero we approximately obtain long memory. 
This is in analogy to the asymptotic rate of decay of the modified CARMA kernel $g_{a,d}$ in the case of subordinator-driven CARMA processes as in \citet*[Section~8]{Brockwell2005}.

\section{Conclusion and outlook}

We have introduced a new FICOGARCH$(1,d,1)$ model, which is strictly stationary and exhibits an algebraic decay in its autocovariances. Moreover, we have shown how to extend this model to a FICOGARCH\linebreak $(p,d,q)$
process of arbitrary order. The properties of the model present it as an appropriate model for high-frequency financial data exhibiting moderate long memory.
We have also presented a simple estimation method for the  model parameters including the fractional  parameter, which works well in a simulation study. 
We have also applied this method to exchange rate data. 
More sophisticated estimation procedures are envisaged like the estimation of $d$ in a preliminary step by an appropriate estimator. Then in a second step the FICOGARCH parameters can be estimated by modified COGARCH estimators as suggested in  \citet{BibbonaNegri2015}, \cite{Haugetal2007}, \citet{Malleretal2008}, or \citet{Thiago}.
This is a version of the semiparametric approach suggested in \citet*{Robinson1994}. 
Alternatively, a quasi-maximum likelihood estimation as in \citet*{TsaiChan2005} can be applied to estimate $d$ and all model parameters in one go.



\appendix

\section{Appendix}


\subsection*{Proofs of Section \ref{s2}}

\begin{proof}[Proof of Proposition \ref{prop3-modsub}]
First note that for every finite variational sample path of $(S^{a,d}_t)_{t\in\R}$  we can apply partial integration and obtain for $t>0$ 
 \begin{align}\label{Mfinite}
  S_t^{a,d} 
   &= \lim_{u\to -\infty} \int_{u}^0 \Big((a+(-s))^d - (a+(t-s))^d \Big) \mathrm{d}S_s 
   + \int_0^t \Big(a^d -(a+(t-s))^d \Big) \mathrm{d}S_s \nonumber\\
   & =  - (a^d -(a+t)^d) S_0 - d \int_0^t (a+(t-s))^{d-1} S_s \mathrm{d}s + (a^d -(a+t)^d) S_0 \nonumber\\
   & \quad - \lim_{u\to -\infty}  \Big((a+(-u))^d - (a+(t-u))^d \Big)  S_u \nonumber\\ 
   & \quad + \lim_{u\to -\infty}  d \int_u^0  \Big((a+(-s))^{d-1} - (a+(t-s))^{d-1} \Big)S_s  \mathrm{d}s
 \end{align}
Now recall that by the SLLN (cf. \citet[Proposition 36.3]{Sato1999}) that
  \[
 \lim_{s\to-\infty}\frac{ S_s}{|s|}\rightarrow \E(S_1)\quad a.s.
  \] 
Therefore, we get
  \begin{align*}
   0  &\leq   \lim_{u\to -\infty}  \Big((a+(-u))^d - (a+(t-u))^d \Big)  S_u 
	 \leq  \lim_{u\to -\infty} d t \, (a-u)^{d-1} \, S_u\\  
	& =  \lim_{u\to -\infty} d t \frac{S_u}{|u|} \, \Big(1- \frac{a}{u}\Big)^{-1} \, (a-u)^d = 0\,,
  \end{align*}
 and, thus, the  limit integral in \eqref{Mfinite} is finite.  
This yields 
  \[
  S_t^{a,d} =  d \int_{-\infty}^0  \Big((a-s)^{d-1} - (a+(t-s))^{d-1}
  \Big)S_s  \mathrm{d}s - d \int_0^t (a+(t-s))^{d-1} S_s \mathrm{d}s.
  \]
For $t\le 0$ we get {analogously}
 \[
  S_t^{a,d} =  d \int_{-\infty}^t  \Big((a-s)^{d-1} - (a+(t-s))^{d-1}
  \Big)S_s  \mathrm{d}s + d \int_t^0 (a-s)^{d-1} S_s \mathrm{d}s.
  \]
Continuity follows from dominated convergence {from Proposition~\ref{prop1-modsub} (iii). }
Since the kernel $f_{a,d}(t,\cdot)$  and almost all paths of $(S_t)_{t\in\R}$ have finite variation, it follows that also $(S^{a,d}_t)_{t\in\R}$ has sample paths of finite variation.
\end{proof}

\begin{proof}[Proof of Proposition \ref{prop4-modsub}]
(i)\  For $t\geq 0$ we have the representation
  \[
  S_t^{a,d} =  \int_{-\infty}^0 \Big((a+(-s))^d - (a+(t-s))^d \Big) \mathrm{d}S_s 
   + \int_0^t \Big(a^d -(a+(t-s))^d \Big) \mathrm{d}S_s\,.
  \]
The functions $t\mapsto \big((a+(-s))^d - (a+(t-s))^d\big) \mathds{1}_{(-\infty, 0)}(s)$ and $t\mapsto  \big(a^d -(a+(t-s))^d\big) \mathds{1}_{(0,t)}(s)$ are for each fixed $s$ positive. 
Therefore, it follows from the above representation that $(S^{a,d}_t)_{t\in\R}$ has strictly increasing sample path if $(S_t)_{t\in\R}$ has strictly increasing sample path.

(ii) For $n\in\N$, $t_0<t_1<\cdots<t_n$ and $a_1,\ldots,a_n\in\R$ we use the Cram\'er-Wold device and calculate
\beao
\sum_{i=1}^n a_i (S^{a,d}_{t_i+h}-S^{a,d}_{t_{i-1}+h})
&=& \sum_{i=1}^n a_i 
\int_\R (g_{a,d}(t_{i-1}+h-u) - g_{a,d}(t_{i}+h-u)) \mathrm{d}S_u\\
&\eqd & \sum_{i=1}^n a_i \int_\R (g_{a,d}(t_{i-1}-v) - g_{a,d}(t_{i}-v)) \mathrm{d}S_v\\
&=& \sum_{i=1}^n a_i  (S^{a,d}_{t_i}-S^{a,d}_{t_{i-1}}),
\eeao
where we have used the stationary increments of $(S_t)_{t\in\R}$.
 
(iii)\ We will only consider the case $t\rightarrow -\infty$. 
The result for $t\rightarrow \infty$ follows by analogous arguments. In the first step, we prove that a SLLN holds for 
$\tilde{S}^{a,d}_t := \int_\R f_{a,d}(t,u)\; d\tilde{S}_u$ with  
\beam\label{Stilde}
\tilde{S}_t := S_t - \E\big(S_t\big)=S_t - \E\big(S_1\big) t\,;
\eeam
i.e., that $\lim_{t\to -\infty} \tilde{S}^{a,d}_t /t = 0$ a.s.. 
The proof is a modification of the proof of \citet[Theorem~3.1]{Fink2011}. 


Without loss of generality we assume that $t<0$. 
By the law of the iterated logarithm (LIL) for L\'evy processes (cf. \citet{Sato1999}, Proposition 48.9) we find a random variable $T$ and a constant $M>0$ such that a.s. for all $s<T$
\begin{equation*}
|\tilde S_s|\leq M(2|s|\log\log |s|)^{\frac{1}{2}}.
\end{equation*} 
We can always make $T$ smaller and so we choose $T<-e$.
For any such path we can assume that $t<T$ and, since \eqref{eq:Riemann}  holds for $\tilde S^{a,d}$, we calculate
\begin{align*}
\tilde S^{a,d}_t 
    & = d \int_{-\infty}^\infty\Big((a+(-s)_+)^{d-1}-(a+(t-s)_+)^{d-1}\Big) \tilde S_s\mathrm{d}s     + d a^{d-1} \int_t^0 \tilde S_s \mathrm{d}s \\
    & = d \int_{-\infty}^t\Big((a+(-s))^{d-1}-(a+(t-s))^{d-1}\Big) \tilde S_s \mathrm{d}s
      + d \int_{t}^0 \Big((a+(-s))^{d-1}- a^{d-1}\Big) \tilde S_s \mathrm{d}s \\
    & \quad + d a^{d-1} \int_t^0 \tilde S_s \mathrm{d}s\,.
\end{align*}
It suffices to show that
    \begin{align}\label{I1}
    \lim_{t\rightarrow-\infty}\frac{1}{|t|}
    \int_{-\infty}^t \Big((a+(-s))^{d-1}-(a+(t-s))^{d-1}\Big) |\tilde S_s| \mathrm{d}s=0\quad a.s.,
    \end{align}
and
    \begin{align}\label{I2}
    \lim_{t\rightarrow-\infty}\frac{1}{|t|}
    \int_{t}^0 (a+(-s))^{d-1} |\tilde S_s| \mathrm{d}s =0\quad a.s.  
    \end{align}

We start with \eqref{I1}. 
Using the LIL we get an upper bound as follows
\begin{align}\label{FLP24c} \nonumber
\lefteqn{\frac{1}{|t|}\int_{-\infty}^t \Big((a+(-s))^{d-1}-(a+(t-s))^{d-1}\Big) |\tilde S_s|\mathrm{d}s }\\ \nonumber
& \leq \frac{M}{|t|}\int_{-\infty}^{-|t|}   \Big((a+(-s))^{d-1}-(a+(t-s))^{d-1}\Big) (2|s|\log\log |s|)^{\frac{1}{2}}\mathrm{d}s\\\nonumber
&= \frac{M}{|t|}\int_{-\infty}^{-|t|-a}   \Big((-v)^{d-1}-(-v-|t|)^{d-1}\Big) (2|v+a|\log\log |v+a|)^{\frac{1}{2}}\mathrm{d}v\\
&= \frac{M|t|}{e|t|}\int_{-\infty}^{-e}\Big((-e^{-1}|t|u -a)^{d-1}-(-|t|-e^{-1}|t|u-a)^{d-1}\Big)
(2(e^{-1}|t||u|)\log\log\big( \frac{|t||u|}{e}\big))^{\frac{1}{2}}\mathrm{d}u,
\end{align}
where we have used in the last line the change of variable $e^{-1}|t|u-a=v$.

Now note that for large $|t|$ and $|u|\geq e$
\begin{align}\label{FLP24b} 
|t||u|\log\log (e^{-1}|t||u|)
 & =|t||u|\log(\log (e^{-1}|t|)+\log|u|)\\ \nonumber
&\leq|t||u|\log\log |t|+|t||u|\log(1+\log|u|)\,.
\end{align}
Combining $(\ref{FLP24b})$ with $|a+b|^{\frac{1}{2}}\leq |a|^{\frac{1}{2}}+|b|^{\frac{1}{2}}$ for $a,b\in\mathbb{R}$ we get an upper bound for $(\ref{FLP24c})$ by
\begin{align}\label{FLP24w}\nonumber
\lefteqn{\frac{M(2e^{-1}|t|\log\log |t|)^{\frac{1}{2}}}{e|t|^{1-d}}
\int_{-\infty}^{-e} \Big((-e^{-1}u-\frac{a}{|t|})^{d-1}-(-1-e^{-1}u-\frac{a}{|t|})^{d-1}\Big) |u|^{\frac{1}{2}}\mathrm{d}u}
\\\nonumber
& \qquad +\frac{M(2e^{-1}|t|)^{\frac{1}{2}}}{e|t|^{1-d}}
\int_{-\infty}^{-e} \Big((-e^{-1}u-\frac{a}{|t|})^{d-1}-(-1-e^{-1}  u-\frac{a}{|t|})^{d-1}\Big)(|u|\log(1+\log |u|))^{\frac{1}{2}}\mathrm{d}u\\\nonumber
&=\frac{M(2e^{-1}\log\log |t|)^{\frac{1}{2}}}{e|t|^{1-(d+\frac{1}{2})}}
\int_{e}^{\infty} \Big((e^{-1}u-\frac{a}{|t|})^{d-1}-(e^{-1}u-1-\frac{a}{|t|})^{d-1} \Big)u^{\frac{1}{2}}\mathrm{d}u
\\
& \qquad +\frac{M(2e^{-1})^{\frac{1}{2}}}{e|t|^{1-(d+\frac{1}{2})}}
\int_{e}^{\infty}\Big((e^{-1}u-\frac{a}{|t|})^{d-1}-(e^{-1}u-1-\frac{a}{|t|})^{d-1}\Big)(u\log(1+\log u))^{\frac{1}{2}}\mathrm{d}u\,.
\end{align}
By a binomial expansion we get 
\[
(e^{-1}u-\frac{a}{|t|}-1)^{d-1}=(e^{-1}u-\frac{a}{|t|})^{d-1}-(d-1)(e^{-1}u-\frac{a}{|t|})^{d-2}+\mathcal{O}(u^{d-3})
\]
and, therefore, (writing $a(u)\sim b(u)$ for $\lim_{u\to\infty} a(u)/b(u) =1$)
\begin{eqnarray*}
\lefteqn{\left[(e^{-1}u-\frac{a}{|t|})^{d-1}-(e^{-1}u-\frac{a}{|t|}a-1)^{d-1}\right](u\log(1+\log |u|))^{\frac{1}{2}}}\nonumber\\
& & \sim (d-1)(e^{-1})^{d-2}u^{d-\frac{3}{2}}(\log\log (u))^{\frac{1}{2}}\,,
\end{eqnarray*}
which ensures the existence of the two integrals in $(\ref{FLP24w})$.
Letting $t\to-\infty$ we obtain \eqref{I1}.\\
 Next we calculate
\[
\frac{1}{|t|}\int_{t}^0 (a+(-s))^{d-1} |\tilde S_s|\mathrm{d}s
     =\frac{1}{|t|}\int_{t}^T (a+(-s))^{d-1} |\tilde S_s|\mathrm{d}s 
  +\frac{1}{|t|}\int_{T}^0 (a+(-s))^{d-1} |\tilde S_s|\mathrm{d}s
\]
The second term tends to zero as $t\to-\infty$, and we consider the first:
\begin{align*}
\frac{1}{|t|}\int_{t}^T (a+(-s))^{d-1} |\tilde S_s|\mathrm{d}s 
 & \leq \frac{M}{|t|}\int_{t}^T (a+(-s))^{d-1} (2|s|\log\log |s|)^{\frac{1}{2}}\mathrm{d}s\\
& \leq \frac{M(2|t|\log\log |t|)^{\frac{1}{2}}}{|t|}\int_{t}^T (a+(-s))^{d-1} \mathrm{d}s\\
&\le   \frac{M(2\log\log |t|)^{\frac{1}{2}}}{|t|^{1/2}}  \int_{t}^T (a-s)^{d-1} \mathrm{d}s\\
&=  \frac{M(2\log\log |t|)^{\frac{1}{2}} (a-t)^d}{d|t|^{\frac{1}{2}}}-\frac{(a-T)^d M(2\log\log |t|)^{\frac{1}{2}}}{d|t|^{\frac{1}{2}}}
\end{align*} 
Letting $t\to-\infty$ we get \eqref{I2}. 
The result follows now from \eqref{Stilde}, which implies that
\[
\int_\R f_{a,d}(t,u) \mathrm{d}S_u = \int_\R f_{a,d}(t,u)d\tilde{S}_u + \E(S_1) \int_\R f_{a,d}(t,u)\mathrm{d}u.
\]
\end{proof}

\begin{proof}[Proof of Proposition \ref{stat_limit}]
For the compact interval $[v,t]$ this is clear. Now consider $v\to-\infty$.  
By the stationarity of the increments of $(S_t^{a,d})_{t\in\R}$ and thus of $(X_t)_{t\in\R}$, we have
    \[
    \int_v^t e^{-(X_t-X_s)}\mathrm{d}s 
        \stackrel{d}{=} \int_v^t e^{-X_{t-s}}\mathrm{d}s 
        = \int_0^{t-v} e^{-X_u}\mathrm{d}u\,,\qquad t>v\,. 
    \]
Hence, we need to show that $\int_0^{t} e^{-X_u}\mathrm{d}u$ converges almost surely to 
a finite random variable as $t\rightarrow\infty$. 
Due to \citet[Theorem 2]{Erickson2005}, this is the case if $X_t\rightarrow \infty$ almost surely as $t\rightarrow\infty$. 
From the definition of $X_t$ we get
 \begin{align*}
     X_t 
        & = \beta_1 t - \alpha_1 S_t^{a,d}
          = \beta_1 t - \alpha_1 \Big(S_t^{a,d} - \E(S_t^{a,d}) +\E(S_t^{a,d}) \Big) \\
        & = t\left(\beta_1 - \alpha_1 \frac{1}{t} \Big(S_t^{a,d} - \E(S_t^{a,d})\Big) - \alpha_1\frac{1}{t} \E(S_1) \int_\R f_{a,d}(t,u)\mathrm{d}u \right)\,.
 \end{align*}
From Proposition~\ref{prop4-modsub} (ii) we know that $\lim_{t\rightarrow\infty}\frac{1}{t} (S_t^{a,d} - \E(S_t^{a,d})) = 0$ a.s.. 
Next we consider the limit $\lim_{t\rightarrow\infty} \frac{1}{t}  \int_\R f_{a,d}(t,u)\mathrm{d}u$. 
To this end we rewrite  the integral
\begin{align*}
 \int_\R f_{a,d}(t,u)\; \mathrm{d}u 
  &=  \int_{-\infty}^0 \left[ (a-u)^d - (a+t-u)^d\right]\; \mathrm{d}u 
    + \int_0^t \left[a^d - (a+t-u)^d \right]\; \mathrm{d}u \\
  &=  \int_{-\infty}^0 t^{d}\left[\left(1-\frac{u-a}{t}\right)^d 
   	- \left(- \frac{u-a}{t}\right)^d\right]\; \mathrm{d}u
   	+ \int_0^t \left[a^d - (a+t-u)^d \right]\; \mathrm{d}u \\
  &=  t^{d+1} \int_{-\infty}^{-a/t} \left[(1-y)^d - (- y)^d\right]\; dy
     + \int_0^t a^{d} -\left[t+a-u\right]^{d}\; \mathrm{d}u \\
  &=:  t^{d+1} c(t) + a^{d}\; t 
    - \frac{1}{d+1} \left(a^{d+1}-(a+t)^{d+1} \right).		
\end{align*}
This implies
    \[
    \lim_{t\rightarrow\infty} \frac{1}{t}  \int_\R f_{a,d}(t,u)\mathrm{d}u
     =  \lim_{t\rightarrow\infty} \left(t^{d} c(t) + a^{d} 
    - \frac{1}{d+1} \left(\frac{a^{d+1}}{t}-\frac{(a+t)^{d+1}}{t} \right)\right)\,,
    \]
which is equal to $a^d$, if $\lim_{t\rightarrow\infty} c(t)$ is finite. But this is
the case, since
    \[
    \lim_{t\rightarrow\infty} c(t)
      = \lim_{t\rightarrow\infty} \int_{-\infty}^{-a/t} \left[(1-y)^d - (- y)^d\right]\; dy
      = \int_{-\infty}^{0} \left[(1-y)^d - (- y)^d\right]\; dy
      = \frac{1}{-1 + d}\,.
     \]
This leads to $\lim_{t\rightarrow \infty}X_t = \infty$, if $\beta_1/\alpha_1 > a^d \E(S_1)$, which proves the result.
\end{proof}

\begin{proof}[Proof of Lemma \ref{stat_sigma_tilde}]
The integral $\int_{-\infty}^t e^{-(X_t-X_s)} \mathrm{d}s$ is well defined due to Proposition \ref{stat_limit}. Hence, we can use the Cram\'er-Wold device and calculate for $a_1,\ldots,a_m\in\R$ and $m\in\N$,
\beao
\frac1{\alpha_0\beta_1}\sum_{i=1}^m a_i \widetilde\sigma^2_{t_i+h}
 &=& \sum_{i=1}^m a_i  \int_{-\infty}^{t_i+h}  \exp\Big\{-\beta_1(t_i+h-s)+\alpha_1( S_{t_i+h}^{a,d}-S_s^{a,d}) \Big\} \mathrm{d}s\\
  &=& \sum_{i=1}^m a_i \int_{-\infty}^{t_i}  \exp\Big\{-\beta_1(t_i-v)+\alpha_1( S_{t_i+h}^{a,d}-S_{v+h}^{a,d}) \Big\} \mathrm{d}v\\
  &\eqd& \sum_{i=1}^m a_i \int_{-\infty}^{t_i}  \exp\Big\{-\beta_1(t_i-v)+\alpha_1( S_{t_i}^{a,d}-S_{v}^{a,d}) \Big\} \mathrm{d}v\\
  &=& \frac1{\alpha_0\beta_1} \sum_{i=1}^m a_i \widetilde\sigma^2_{t_i},
\eeao
by the stationary increments of $(S^{a,d}_t)_{t\in\R}$.
\end{proof}

\subsection*{Proofs for Section \ref{s4}}

\begin{proof}[Proof of Proposition \ref{prop1-modsub}]
(i), (ii) and (iii) are obvious; (iv) is based on the fact that for all $t\in\R$, by a Taylor expansion,
\beao
\lim_{s\to -\infty} \frac{|f_{a,d}(t,s)|}{|s|^{d-1}} 
= \lim_{s\to -\infty} \frac{|(a-s)^d - (t+a-s)^d|}{|s|^{d-1}}  
= \lim_{s\to -\infty} \frac{|dt (a-s)^{d-1}|}{|s|^{d-1}}  = |d t|. 
\eeao
(v) From (iii) follows that it suffices to check for which $\delta>0$ the integral 
$\int_{-\infty}^N |s|^{\delta(d-1)} \mathrm{d}s<\infty$ for some $N<0$. This is exactly the case for $\delta>1/|d-1|$.
\end{proof}

\begin{proof}[Proof of Proposition \ref{prop2-modsub}]

To show that \eqref{frac_sub_modmvn} exists as an $L^2(\Omega,P)$ limit of approximating step functions, we apply Theorem 3.3 in \citet{Rajput1989}. It follows that \eqref{frac_sub_modmvn} exists for all $t\in\R$ in the $L^2$-sense if
\begin{align*}
	\int_{\R} & 
	    \left[ f_{a,d}(t,s) \gamma_S 
	            + \int_{\R} f_{a,d}(t,s) x 
								\left[\mathds{1}_{\{|f_{a,d}(t,s)x|\le 1\}}-\mathds{1}_{\{|x|\le 1\}}\right] \;
								\nu_S(dx) 
				+ \int_\R (f_{a,d}(t,s)x)^2\;\nu_S(dx)\right]\;ds 
		< \infty
\end{align*}
Since $\gamma_S = \E(S_1) - \int_{|x|>1}x\;\nu_S(dx)$, we find that
\begin{align*}
	\int_{\R} & \left[ f_{a,d}(t,s)\;  \E(S_1) 
			    + \int_{\R} f_{a,d}(t,s) x \left[\mathds{1}_{\{|f_{a,d}(t,s)x|\le 1\}}-1\right] \;\nu_S(dx) 
			    + \int_\R (f_{a,d}(t,s)x)^2\;\nu_S(dx)\right]\;ds\\
    		& \leq \E(S_1) \int_\R f_{a,d}(t,s)\;ds 
    		    +\int_\R\int_{\R} f_{a,d}(t,s)x \mathds{1}_{\{|f_{a,d}(t,s)x|> 1\}}\;\nu_S(dx)\; ds
				+ \int_\R (f_{a,d}(t,s)x)^2\;\nu_S(dx)\;ds\\	
			&\le \E(S_1)\; \|f_{a,d}(t,\cdot)\|_{L^1(\R)} + 2\int_\R\int_{\R}(f_{a,d}(t,s)x)^2 \;\nu_S(dx)\; ds	\\
			&= \E(S_1)\; \|f_{a,d}(t,\cdot)\|_{L^1(\R)} 
			    + 2 \|f_{a,d}(t,\cdot)\|^2_{L^2(\R)} \int_{\R} x^2\;\nu_S(dx)
			 < \infty\,,
\end{align*}
where the last expression is finite due to Proposition \ref{prop1-modsub} (v) and $\E(S_1^2)<\infty$.
\end{proof}

\begin{proof}[Proof of Lemma \ref{lem:fquad}] By substituting $y:={(u-a)}/{t}$ we obtain
  \begin{align*}
  \int_\R f^2_{a,d}(t,u)\; \mathrm{d}u 
  &= \int_{-\infty}^0 \left[ (a-u)^d - (a+t-u)^d\right]^2\; \mathrm{d}u 
    + \int_0^t \left[a^d - (a+t-u)^d \right]^2\; \mathrm{d}u \\
  &= \int_{-\infty}^0 t^{2d}\left[\left(1-\frac{u-a}{t}\right)^d 
   	- \left(- \frac{u-a}{t}\right)^d\right]^2\; \mathrm{d}u
   	+ \int_0^t \left[a^d - (a+t-u)^d \right]^2\; \mathrm{d}u \\
  &= t^{2d+1} \int_{-\infty}^{-a/t} \left[(1-y)^d - (- y)^d\right]^2\; dy
     + \int_0^t a^{2d} - 2a^{d}\left[t+a-u\right]^d +\left[t+a-u\right]^{2d}\; \mathrm{d}u \\
  &= t^{2d+1} c(t) + a^{2d}\; t +\frac{2a^d}{d+1} \left[t+a-u\right]^{d+1}\Big|_0^t
    - \frac{1}{2d+1} \left[t+a-u\right]^{2d+1}\Big|_0^t.							  
    \end{align*}
Further note that for the normalization constant $1/\Gamma(d+1)$ in \eqref{mvnkernel} we obtain
  \begin{align*}
	\int_{-\infty}^1 \left[(1-y)^d_+ - (-y)^d_+\right]^2\; dy 
	&= \Gamma(d+1) \int_{-\infty}^1 \left(\mvn(1,y)\right)^2\; dy 
	= \frac{\Gamma(d+1)}{\Gamma(2d+2)\sin(\pi(d+0.5))}.
  \end{align*}
Consequently,
  \begin{align*}
	c(t) 
	&= \frac{\Gamma(d+1)}{\Gamma(2d+2)\sin(\pi(d+0.5))}
	  -\int_{-a/t}^0\left[(1-y)^d-(-y)^d\right]^2\; dy
	  - \int_0^1 (1-y)^{2d}\; dy \\
	&= \frac{\Gamma(d+1)}{\Gamma(2d+2)\sin(\pi(d+0.5))}  
	-\int_{-a/t}^0\left[(1-y)^d-(-y)^d\right]^2\; dy
	+ \frac{1}{2d+1}.			 
\end{align*}
Since $\int_{-a/t}^0\left[(1-y)^d-(-y)^d\right]^2\; dy \to 0$ as $t\to\infty$, the assertion holds.
\end{proof}

\begin{proof}[Proof of Proposition \ref{prop5-modsub}]
We use the notation as in \eqref{Stilde},
$$\tilde{S}_t := S_t - \E\big(S_t\big)\quad\mbox{and}\quad \tilde{S}^{a,d}_t := \int_\R f_{a,d}(t,u)\; d\tilde{S}_u.$$
For $t,s\ge 0$ we calculate
\begin{align*}
	\cov(S^{a,d}_t, S^{a,d}_s) 
	&= \cov(\tilde{S}^{a,d}_t, \tilde{S}^{a,d}_s) 
	= \E\big(\tilde{S}^{a,d}_t \tilde{S}^{a,d}_s\big) 
	= \frac{1}{2}\left( \E\big((\tilde{S}^{a,d}_t)^2\big)  
	 + \E\big((\tilde{S}^{a,d}_s)^2\big)
	 - \E\big((\tilde{S}^{a,d}_t - \tilde{S}^{a,d}_s)^2\big)\right) \\
	 &= \frac{1}{2}\left( \E\big((\tilde{S}^{a,d}_t)^2\big)  
	+ \E\big((\tilde{S}^{a,d}_s)^2\big)																		- \E\big((\tilde{S}^{a,d}_{t-s})^2\big)\right).
\end{align*}
In the last step we have used that the increments are stationary. 
Furthermore, 
\begin{align*}
	\E\big((\tilde{S}^{a,d}_t)^2\big)
	   = \var\big(\tilde{S}^{a,d}_t \big)  
	   = \var\left(S_1\right) \int_\R f^2_{a,d}(t,u)\; \mathrm{d}u,
\end{align*}
such that by the linearity of the covariance operator,
\begin{align*}
\gamma_\Delta(h) &=  \cov(S^{a,d}_{s+(h+1)\Delta}, S^{a,d}_{s+\Delta})
										- \cov(S^{a,d}_{s+(h+1)\Delta}, S^{a,d}_{s})
										- \cov(S^{a,d}_{s+h\Delta}, S^{a,d}_{s+\Delta}) 
									 + \cov(S^{a,d}_{s+h\Delta}, S^{a,d}_{s}) \\					
&= \frac{1}{2} \left( \E\big((\tilde{S}^{a,d}_{(h+1)\Delta})^2\big)  + \E\big((\tilde{S}^{a,d}_{(h-1)\Delta})^2\big)- 2 \E\big((\tilde{S}^{a,d}_{h\Delta})^2\big)\right) \\
&= \frac{1}{2} \var\left(S_1\right)  
\left[ \int_\R f^2_{a,d}\big((h+1)\Delta,u\big)\; \mathrm{d}u + \int_\R f^2_{a,d}\big((h-1)\Delta,u\big)\; \mathrm{d}u							- 2 \int_\R f^2_{a,d}(h\Delta,u)\; \mathrm{d}u\right].
\end{align*}
Now, using Lemma \ref{lem:fquad} we obtain
\begin{align*}
	\gamma_\Delta(h) &= \frac{1}{2} \var\left(S_1\right)
												\Big[ -\frac{2a^d}{d+1}\left(((h\Delta+a)+\Delta)^{d+1} +  ((h\Delta+a)-\Delta)^{d+1} - 2 (h\Delta+a)^{d+1} \right) \\
&\qquad\qquad\qquad+\frac{1}{2d+1} \left( ((h\Delta+a)+\Delta)^{2d+1} + ((h\Delta+a)-\Delta)^{2d+1} - 2 (h\Delta+a)^{2d+1} \right) \\
&\qquad\qquad\qquad+ c(h\Delta+\Delta) (h\Delta+\Delta)^{2d+1} + c(h\Delta-\Delta) (h\Delta-\Delta)^{2d+1} - 2c(h\Delta) (h\Delta)^{2d+1}\Big],
\end{align*}
where $c(t)$ is defined as in \eqref{eq:fc} and according to \eqref{eq:asymptCt} converges for $t\to\infty$ to a positive constant, which we denote by $c$. 
Consequently, a Taylor expansion gives for $h\to\infty$,
\begin{align*}
	\gamma_\Delta(h) &= \frac{1}{2} \var\left(S_1\right)
	               \Big[ -\frac{2a^d}{d+1}(h\Delta+a)^{d+1} \left( (d+1)d\frac{\Delta^2}{(h\Delta+a)^2} + 
														\mathcal O\left(\frac{1}{(h\Delta+a)^4}\right)\right) \\
	&\qquad\qquad\qquad\qquad\qquad+ \frac{(h\Delta+a)^{2d+1}}{2d+1} \left((2d+1)2d\frac{\Delta^2}{(h\Delta+a)^2} +
														\mathcal O\left(\frac{1}{(h\Delta+a)^4}\right)\right) \\
	&\qquad\qquad\qquad\qquad\qquad+ c \left( (2d+1)2d\frac{\Delta^2}{(h\Delta)^2} +
														\mathcal O\left(\frac{1}{(h\Delta)^4}\right)\right)
									\Big] \\
&\sim \var\left(S_1\right) (-d) a^d  \Delta^2 \; (h\Delta+a)^{d-1}.
\end{align*} 
\end{proof}

\section*{Acknowledgements}
We thank Aleksey Min from the Chair of Mathematical Finance at the Technical University of Munich for access to the Chair's Thomas Reuters database. Further we would like to thank Thiago do R\^{e}go Sousa for interesting discussions and useful comments on the simulation-based version of the generalised method of moments.

\medskip

\bibliographystyle{chicago}

\bibliography{fracCOGARCH_bib}

\vskip 1cm
\noindent
TECHNICAL UNIVERSITY OF MUNICH\\
DEPARTMENT OF MATHEMATICS\\
85748 GARCHING, GERMANY.
\vskip 2pt
\noindent
E-MAIL: \{\;haug\,,\,cklu\;\}@tum.de\\
URL: \url{http://www.statistics.ma.tum.de}

\end{document}